\theoremstyle{plain}
\theoremstyle{plain}
\newtheorem{theorem}{Theorem} [section]
\newtheorem{corollary}[theorem]{Corollary}
\newtheorem{lemma}[theorem]{Lemma}
\newtheorem{proposition}[theorem]{Proposition}
\theoremstyle{definition}
\newtheorem{remark}[theorem]{Remark}
\numberwithin{theorem}{section}
\numberwithin{equation}{section}
\numberwithin{figure}{section}
\def\mean#1{\mathchoice
         {\mathop{\kern 0.2em\vrule width 0.6em height 0.69678ex depth -0.58065ex
                 \kern -0.8em \intop}\nolimits_{\kern -0.4em#1}}%
         {\mathop{\kern 0.1em\vrule width 0.5em height 0.69678ex depth -0.60387ex
                 \kern -0.6em \intop}\nolimits_{#1}}%
         {\mathop{\kern 0.1em\vrule width 0.5em height 0.69678ex
             depth -0.60387ex
                 \kern -0.6em \intop}\nolimits_{#1}}%
         {\mathop{\kern 0.1em\vrule width 0.5em height 0.69678ex depth -0.60387ex
                 \kern -0.6em \intop}\nolimits_{#1}}}
\newcommand{\footnoteremember}[2]{
\footnote{#2}
\newcounter{#1}
\setcounter{#1}{\value{footnote}}
}
\def\N{\mathbb N}
\def\R{\mathbb R}
\def\e{\varepsilon}
\def\l{\lambda}
\def\n{\nabla}
\newcommand\zero{{\mathbf 0}}
\newcommand{\p}{\partial}
\def\om{\omega}
\def\U{\mathcal U}
\def\C{{\bf{\mathcal C}}}
\def\1{\mathbf 1}
\def\pac{\partial_{c}}
\def\pacs{\partial_{c^{*}}}
\DeclareMathOperator*{\osc}{osc}
\DeclareMathOperator*{\argmin}{argmin}
\DeclareMathOperator{\dist}{dist}
\DeclareMathOperator{\diam}{diam}
\DeclareMathOperator{\spt}{spt}
\DeclareMathOperator{\co}{co}
\DeclareMathOperator{\Id}{Id}
\DeclareMathOperator{\cexp}{c-exp}
\DeclareMathOperator{\crhoexp}{c_{\rho}-exp}
\DeclareMathOperator{\cstarexp}{c*-exp}
\title{Partial regularity for optimal transport maps}
\author[G. De Philippis]{Guido De Philippis}
\address{Scuola Normale Superiore,
p.za dei Cavalieri 7, I-56126 Pisa, Italy}
\email{guido.dephilippis@sns.it}
\author[A. Figalli]{Alessio Figalli}
\address{Department of Mathematics,
The University of Texas at Austin, 1 University Station C1200,
Austin TX 78712, USA}
\email{figalli@math.utexas.edu}
\keywords{}
\begin{document}
\begin{abstract}
We prove that, for general cost functions on $\R^n$, or for the cost $d^2/2$ on a Riemannian manifold,
optimal transport maps between smooth densities are always smooth outside a closed singular set of measure zero.
\end{abstract}

\maketitle

\section{Introduction}

A natural and important issue in optimal transport theory
is the 
regularity of optimal transport maps.
Indeed, apart from being a typical PDE/analysis question, knowing whether optimal maps are smooth or not
is an important step towards a qualitative understanding
of them.

It is by now well known that, for the smoothness of optimal maps, conditions on both the cost function
and on the geometry of the supports of the measures are needed.

In the special case $c(x,y)=|x-y|^2/2$ on $\mathbb R^n$,
Caffarelli \cite{Caf1,Caf2,Caf3,Caf4} proved regularity of optimal maps under suitable assumptions on
the densities and on the geometry of their
support.
More precisely, in its simplest form, Caffarelli's result states as follows:

\begin{theorem}
Let $f$ and $g$ be smooth probability densities, respectively bounded away from zero and infinity on two bounded open sets $X$ and $Y$,
and let $T:X\to Y$ denote the unique optimal transport map from $f$ to $g$ for the quadratic cost $|x-y|^2/2$.
If $Y$ is convex, then $T$ is smooth inside $X$.
On the other hand, if $Y$ is not convex, then there exist smooth densities $f$ and $g$ (both bounded away from zero and infinity on $X$ and $Y$, respectively)
for which the map $T$ is not continuous.
\end{theorem}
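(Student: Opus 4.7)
The plan is to reduce the theorem to the classical Monge--Amp\`ere regularity theory developed by Caffarelli. By Brenier's theorem, the optimal map can be written as $T=\nabla u$ for some convex function $u:X\to\R$, and the push-forward condition $T_\sharp f=g$ translates (formally) into the Monge--Amp\`ere equation
\[
\det D^2 u(x)=\frac{f(x)}{g(\nabla u(x))}\qquad\text{on }X,
\]
which, since $f$ and $g$ are bounded away from $0$ and $\infty$, should be read in the Brenier/Alexandrov sense as pinching the Monge--Amp\`ere measure of $u$ between two positive multiples of Lebesgue measure. The goal is then to promote this weak solution to a classical smooth one, using the convexity of the target set.

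The core regularity argument splits into three steps. First, I would prove \emph{strict convexity} of $u$ in $X$: if $u$ had a non-trivial supporting line touching its graph on a set with more than one point, the image $\nabla u(X)=Y$ would have to contain, up to a null set, the convex hull of the image of that contact set, and the convexity of $Y$ combined with a careful analysis at the boundary of the contact set would force a contradiction with the Monge--Amp\`ere equation (this is the key input where the convexity of $Y$ enters). Second, from strict convexity plus the bounds on the Monge--Amp\`ere measure, Caffarelli's $C^{1,\alpha}$ theory gives $\nabla u\in C^{0,\alpha}_{\mathrm{loc}}(X)$. Third, once $T=\nabla u$ is H\"older continuous, the Monge--Amp\`ere equation becomes a uniformly elliptic fully nonlinear PDE with H\"older coefficients (since $f$ and $g$ are smooth and bounded above and below), so a Schauder/Evans--Krylov bootstrap yields $u\in C^\infty(X)$.

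The main obstacle, and the place where the geometric hypothesis on $Y$ is really used, is the first step: proving strict convexity of the Brenier potential when $Y$ is convex. All subsequent regularity is then essentially a PDE bootstrap. For the sharpness part I would exhibit a dumbbell-shaped counterexample: take $X$ to be a ball and $Y$ to be two nearly disjoint regions joined by a thin corridor (with densities bounded away from $0$ and $\infty$ on each). By a topological/mass-balance argument, the optimal map must send a positive-measure piece of $X$ to each of the two lobes of $Y$, but since the preimages of the two lobes are convex subsets of $X$ (being level sets of $\nabla u$ against half-spaces separating the lobes), they can only meet along a hyperplane, forcing $T$ to have a jump discontinuity across that interface as the corridor is pinched.
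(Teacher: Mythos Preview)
This theorem is not proved in the paper at all: it appears in the introduction purely as background, attributed to Caffarelli with citations \cite{Caf1,Caf2,Caf3,Caf4}, and the paper then moves on to its own (different) partial-regularity results. So there is no ``paper's own proof'' to compare against.

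That said, your outline is essentially Caffarelli's original strategy and is correct as a sketch. One sharpening worth noting for the strict-convexity step: the precise role of the convexity of $Y$ is to guarantee that the \emph{full} subdifferential $\partial u(x)$ (which is always a convex set whose extreme points are limits of gradients $\nabla u(x_k)\in Y$) stays inside $\overline Y$; this is what promotes $u$ from a Brenier solution to an Alexandrov solution with two-sided bounds $\lambda|E|\le |\partial u(E)|\le \Lambda|E|$. Caffarelli's localization argument then proceeds by taking an extremal point of a hypothetical affine contact set and using the Alexandrov maximum principle to force a contradiction with these bounds --- rather than the ``convex hull of the image of the contact set'' picture you describe. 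The rest of your bootstrap ($C^{1,\alpha}$ from strict convexity, then Schauder/Evans--Krylov) and your dumbbell counterexample for non-convex $Y$ are the classical ones.
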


A natural question which arises from the previous result is whether one may prove some partial regularity on $T$ when the convexity assumption on $Y$ is removed.
In \cite{F-partial,FK-partial} the authors proved the following result:

\begin{theorem}
Let $f$ and $g$ be smooth probability densities, respectively bounded away from zero and infinity  on two bounded open sets $X$ and $Y$,
and let $T:X\to Y$ denote the unique optimal transport map from $f$ to $g$ for the quadratic cost $|x-y|^2/2$.
Then there exist two open sets
$X'\subset X$ and $Y'\subset Y$, with $|X\setminus X'|=|Y\setminus Y'|=0$,
such that $T:X' \to Y'$ is a smooth diffeomorphism.
\end{theorem}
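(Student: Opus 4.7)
The plan is to reduce the problem locally to the convex-target case of the previous theorem. More precisely, I would show that at $\mathcal L^n$-a.e.\ $x_0\in X$ one can find a small neighborhood $U_{x_0}\subset X$ with $T(U_{x_0})$ compactly contained in $Y$ and \emph{convex}, so that the restriction of $T$ to $U_{x_0}$ (after renormalizing the densities) is the optimal transport map between two smooth densities on $U_{x_0}$ and on a convex set; Caffarelli's theorem then yields smoothness on $U_{x_0}$. Taking $X'$ to be the union of the $U_{x_0}$ produces an open set of full measure on which $T$ is a smooth diffeomorphism onto its image; the corresponding $Y':=T(X')$ is open (since $T$ is a local diffeomorphism there) and has full measure by the pushforward identity $T_{\#}(f\,\mathcal L^n)=g\,\mathcal L^n$ together with $g$ being bounded away from zero on $Y$.

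The first ingredient is Brenier's theorem: $T=\nabla u$ for a convex function $u\colon X\to\mathbb R$, solving the Monge--Amp\`ere equation $\det D^2u=f/(g\circ\nabla u)$ in the Brenier sense. By Alexandrov's theorem applied to $u$ and to its Legendre dual $u^*$, together with Lebesgue differentiation applied to $f$ and $g$, for a.e.\ $x_0\in X$ one has simultaneously: $u$ is twice differentiable at $x_0$ with $D^2u(x_0)>0$; the image $y_0:=\nabla u(x_0)$ is a twice-differentiability point of $u^*$ with $D^2u^*(y_0)=(D^2u(x_0))^{-1}>0$; and $x_0$, $y_0$ are Lebesgue points of $f$ and $g$ respectively. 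Call $X''$ this full-measure set of \emph{good} points, and fix $x_0\in X''$.

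Now I would exploit the convex sections
\[
S_h(x_0):=\bigl\{x\in\mathbb R^n\,:\,u(x)\le u(x_0)+\langle y_0,x-x_0\rangle+h\bigr\}
\]
and the analogous sections $S^*_h(y_0)$ of $u^*$, both of which are convex. By the twice-differentiability of $u$ and $u^*$, the sections are close to ellipsoids for $h$ small: after the linear rescaling normalizing $D^2u(x_0)$ to the identity, $S_h(x_0)$ converges to the Euclidean ball $\{|\cdot|^2/2\le h\}$, and analogously for $S^*_h(y_0)$. In particular there exists $h_0=h_0(x_0)>0$ such that $S_{h_0}(x_0)\Subset X$, $S^*_{h_0}(y_0)\Subset Y$, and one can check that $\nabla u$ maps the interior of $S_{h_0}(x_0)$ onto the interior of $S^*_{h_0}(y_0)$ up to a Lebesgue-null set. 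The restricted map is thus the optimal transport between the renormalizations of $f|_{S_{h_0}(x_0)}$ and $g|_{S^*_{h_0}(y_0)}$, and since the target is convex the previous theorem delivers smoothness on $\mathrm{int}\,S_{h_0}(x_0)$.

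The main obstacle is the geometric claim that for $h$ small one genuinely has $\nabla u(S_h(x_0))=S^*_h(y_0)$ modulo null sets, with both sections compactly contained in $X$ and $Y$ respectively. Controlling how much $f$-measure $\nabla u$ could send outside $S^*_h(y_0)$, or equivalently how much $g$-measure of $S^*_h(y_0)$ is not hit by $\nabla u$, is precisely where the regularity of both potentials at the good point $x_0$ is essential: positive-definiteness of the Hessians prevents the sections from being degenerate (so they have comparable sizes and volumes, related by the Monge--Amp\`ere equation), while twice-differentiability lets one compare them quantitatively to genuine ellipsoids. Once this geometric step is in place, the rest of the argument is a direct application of Caffarelli's interior regularity theorem.
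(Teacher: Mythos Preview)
Your overall localization strategy is the right one for the quadratic cost and is essentially the approach of the cited works \cite{F-partial,FK-partial}: find a good point, pass to a small section, and apply Caffarelli's interior regularity there. The present paper does not give a separate proof of this statement (it is quoted as prior work), and its own machinery for the general-cost Theorem~\ref{thm:c} proceeds differently, by an iteration comparing $u$ with smooth solutions of $\det D^2v=1$; that heavier route is needed because for non-quadratic costs the sections $S(x_0,y_0,u,h)$ are no longer convex.

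However, your proof has a genuine gap at the step you yourself flag as the main obstacle. The identity $\nabla u\bigl(S_{h}(x_0)\bigr)=S^*_{h}(y_0)$ modulo null sets is \emph{false}, even for smooth strictly convex $u$ and arbitrarily small $h$. A direct computation using $u(x)+u^*(y)=\langle x,y\rangle$ when $y=\nabla u(x)$ gives, for $x\in\partial S_h(x_0)$,
\[
u^*(y)-u^*(y_0)-\langle x_0,y-y_0\rangle=\langle x-x_0,y-y_0\rangle-h,
\]
which is not $h$ in general; already for $u(t)=t^2/2+t^4/4$ in one dimension, $\nabla u(S_h(0))$ strictly contains $S_h^*(0)$ by a set of positive measure. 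Thus you cannot invoke the previous theorem with target $S^*_h(y_0)$, and the image $\nabla u(S_h(x_0))$ itself has no reason to be convex.

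The repair does not need a convex target at all. From second-order differentiability at $x_0$ one gets, for $h$ small, $\partial^-u\bigl(S_h(x_0)\bigr)\Subset Y$; this is enough (see \cite[Proposition~3.4]{FK-partial}, also used in Step~1 of the proof of Theorem~\ref{c1alpha}) to conclude that $u$ is a \emph{strictly convex Alexandrov solution} of $\det D^2u=f/(g\circ\nabla u)$ on a slightly smaller section. Caffarelli's interior regularity theory then applies directly to strictly convex Alexandrov solutions with $C^\infty$ right-hand side, giving $u\in C^\infty$ near $x_0$ without ever identifying the image set. With this correction, the rest of your outline (union of neighborhoods, inverse via $u^*$, full measure of $Y'$ from the pushforward) goes through.
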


In the case of general cost functions on $\R^n$, or when $c(x,y)=d(x,y)^2/2$ on a Riemannian manifold $M$ ($d(x,y)$ being the Riemannian distance), the situation is much more complicated.
Indeed, as shown by Ma, Trudinger,
and Wang \cite{MTW}, and Loeper \cite{Loe1},
in addition to suitable convexity assumptions on the support of the target density (or on the cut locus of the manifold when ${\rm supp}(g)=M$ \cite{FRV-nec}),
a very strong structural condition on the cost function, the so-called \textit{MTW condition}, is needed to ensure the smoothness of the map.

More precisely, if the MTW condition holds (together with some suitable convexity assumptions on the target domain), then the optimal map is smooth \cite{TW1,TW2,FL,LTWC2a,FKM-C1a}.
On the other hand, if the MTW condition fails at one point, then one can construct smooth densities (both supported on domains
which satisfy the needed convexity assumptions) for which the optimal transport map is not continuous \cite{Loe1} (see also \cite{F-bourbaki}).

In the case of Riemannian manifolds, the MTW condition for $c=d^2/2$ is very restrictive: indeed, as shown by Loeper \cite{Loe1}, it implies that $M$
has non-negative sectional curvature, and actually it is much stronger than the latter \cite{Kim,FRV-surfaces}.
In particular, if $M$ has negative sectional curvature, then the MTW condition fails at \emph{every} point.
Let us also mention that, up to now, the MTW condition is known to be satisfied only for very special classes of Riemannian manifolds,
such as spheres, their products, their quotients and submersions, and their perturbations \cite{Loe2,FR,DG,KMC2,FRVsn,FKM-sphere,DR},
and for instance it is known to fail on sufficiently flat ellipsoids \cite{FRV-surfaces}.\\

The goal of the present paper is to show that, even without any condition on the cost function or on the supports of the densities,
optimal transport maps  are always smooth outside a closed singular set of measure zero.
In order to state our results, we first have to introduce some basic assumptions on the cost functions which are needed to ensure 
existence and uniqueness of optimal maps.
As before, $X$ and $Y$ denote two open subsets of $\R^n$.
\begin{enumerate}
\item[{\bf (C0)}] The cost function $c:X\times Y\to \R$ is of class $C^{2,\alpha}$ for some $\alpha \in (0,1)$,
with $\|c\|_{C^{2,\alpha}(X\times Y)}<\infty$.
\item[{\bf (C1)}] For any $x \in X$, the map $Y \ni y \mapsto -D_x c(x, y) \in \R^n$ is injective.
\item[{\bf (C2)}] For any $y \in Y$, the map $X \ni x \mapsto -D_y c(x, y) \in \R^n$ is injective.
\item[{\bf (C3)}] $\det(D_{xy} c)(x,y) \neq 0$ for all $(x,y) \in X \times Y$.
\end{enumerate}

Here are our main results:
\begin{theorem}
\label{thm:c}
Let $X,Y\subset \R^n$ be two bounded open sets, and let
$f:X\to \R^+$ and $g:Y\to \R^+$ be two
continuous
probability densities, respectively 
bounded away from zero and infinity on $X$ and $Y$.
Assume that the cost $c:X\times Y \to \R$ satisfies {\bf (C0)}-{\bf (C3)}, and denote by $T:X\to Y$ the unique optimal transport map sending $f$ onto $g$.
Then there exist two relatively closed sets $\Sigma_X \subset X,\Sigma_Y\subset Y$ of measure zero such that
$T:X\setminus \Sigma_X \to Y\setminus \Sigma_Y$ is a homeomorphism of class $C_{\rm loc}^{0,\beta}$ for any $\beta<1$.
In addition, if $c\in C^{k+2,\alpha}_{\rm loc}(X\times Y)$, $f\in C^{k,\alpha}_{\rm loc}(X)$, and $g\in C^{k,\alpha}_{\rm loc}(Y)$
for some $k \geq 0$ and $\alpha \in (0,1)$, then $T:X\setminus \Sigma_X\to Y\setminus \Sigma_Y$ is a
diffeomorphism of class $C_{\rm loc}^{k+1,\alpha}$.
\end{theorem}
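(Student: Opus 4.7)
The plan is to adapt the partial regularity strategy of~\cite{F-partial,FK-partial}, originally developed for the quadratic cost, to general cost functions satisfying (C0)-(C3). The guiding philosophy is that Caffarelli's regularity theorem is, in an appropriate sense, a \emph{local} theory: if an optimal map sends a sufficiently small $c$-section of the potential into an almost-convex subset of $Y$, one can extract interior smoothness. I therefore aim to identify, up to a closed set of measure zero, the points where such a localization is available, and then run a local analogue of Caffarelli's argument in coordinates that make the cost a small $C^{2}$ perturbation of the Euclidean inner product.

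\medskip

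\textbf{Setup and definition of the singular set.} Under (C0)-(C3) the optimal map takes the form $T(x)=\cexp_x(\nabla\varphi(x))$ for a $c$-convex Kantorovich potential $\varphi$, semiconvex and twice differentiable a.e., solving a Monge-Amp\`ere-type equation. At a candidate point $x_0$ with $y_0=T(x_0)$, I would change coordinates so that $D_{xy}c(x_0,y_0)=-\Id$; then the cost coincides with $-x\cdot y$ up to quadratic error at $(x_0,y_0)$, and the $c$-sections
\[
S_h(x_0)=\{x\in X:\varphi(x)+c(x,y_0)<\varphi(x_0)+c(x_0,y_0)+h\}
\]
behave asymptotically like sublevel sets of a convex function. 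Define $\Sigma_X\subset X$ to be the set of $x_0$ for which, at every small scale $h>0$, either $S_h(x_0)$ fails to be compactly contained in $X$, or the Hausdorff distance between $T(S_h(x_0))$ and its convex hull is not $o(h^{1/2})$ after the normalizing rescaling. By definition the complement $\mathcal R=X\setminus\Sigma_X$ is open, so $\Sigma_X$ is relatively closed in $X$.

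\medskip

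\textbf{Negligibility of $\Sigma_X$ via blow-up.} The heart of the proof is the claim $|\Sigma_X|=0$. At a point $x_0$ that is a Lebesgue point of $f$, of $g\circ T$, and of $\nabla\varphi$, and at which $\varphi$ is twice differentiable, I would rescale $S_h(x_0)$ affinely so it has controlled eccentricity and unit volume, and let $h\to 0$. The rescaled costs then converge in $C^{2}$ to the Euclidean cost $-X\cdot Y$, the rescaled densities to positive constants, and the rescaled potentials to a Brenier potential between uniform measures on bounded sets. Since a Brenier map between uniform densities on bounded sets has image equal to the (convex) subdifferential image of a convex function, the rescaled image of $S_h(x_0)$ converges to a convex set; making this convergence quantitative contradicts $x_0\in\Sigma_X$. \emph{The main obstacle} is precisely this quantitative step: without the MTW condition, the $c$-image of a $c$-section may fail to be convex at macroscopic scales, and MTW is exactly the second-order condition that would guarantee convexity at every scale. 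Its absence forces one to exploit the fact that MTW failure is a higher-order effect that vanishes under blow-up, and to quantify that vanishing at a rate compatible with Caffarelli's Alexandrov-type volume bounds and the John-ellipsoid normalization of the sections. Establishing this rate, and transferring it back to the original scale, is the principal technical burden.

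\medskip

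\textbf{Regularity on $\mathcal R$ and bootstrap.} On $\mathcal R$, at each $x_0$ one has a scale at which the normalized problem is a $C^{2,\alpha}$ perturbation of a Brenier problem with bounded, strictly positive densities and a nearly convex target. A stability/compactness argument combined with Caffarelli's interior estimates then yields strict $c$-convexity of $\varphi$ near $x_0$ and $T\in C^{0,\beta}_{\rm loc}$ for every $\beta<1$, which gives the homeomorphism statement after doing the symmetric construction to identify $\Sigma_Y$ with the analogous set in $Y$. Once strict $c$-convexity holds, the Monge-Amp\`ere-type equation is uniformly elliptic in a neighborhood of $x_0$, and a standard Schauder bootstrap as in~\cite{LTWC2a,FKM-C1a} upgrades the regularity to $T\in C^{k+1,\alpha}_{\rm loc}$ under the stated higher regularity assumptions on $c$, $f$, and $g$.
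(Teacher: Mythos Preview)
Your high-level strategy---localize at points of second-order differentiability, blow up so that the cost becomes a $C^2$-perturbation of $-x\cdot y$, and then run a Caffarelli-type argument---matches the paper's. But you misplace the difficulty, and this leads you down a detour that the paper avoids.

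The paper does \emph{not} define the singular set via a geometric criterion on images of $c$-sections, and it never needs to prove that $T(S_h(x_0))$ is almost convex. (Your definition of $\Sigma_X$ is also not obviously well-posed: the ``$o(h^{1/2})$'' clause is a limit statement, yet you quantify over each fixed scale $h$; and openness of the complement does not follow from your description without further work.) Instead, the regular set is built directly from points $\bar x$ where $u$ is twice differentiable and $u^c$ is twice differentiable at $T_u(\bar x)$---a full-measure set by Alexandrov and the push-forward relation. At such $\bar x$, after an affine normalization bringing $D^2 u(\bar x)$ and $-D_{xy}c(\bar x,\bar y)$ to the identity and a dilation $u_\rho(\tilde z)=\rho^{-2}\bar u(\rho\tilde z)$, the potential is $C^0$-close to $|\tilde z|^2/2$, the cost is $C^2$-close to $-\tilde z\cdot\tilde w$, the densities are close to $1$, and $\partial_{c_\rho}u_\rho(\tilde z)\subset B_{\delta_\rho}(\tilde z)$. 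All of this is immediate from second-order Taylor expansion; no analysis of image convexity is required. Your ``main obstacle''---quantifying the rate at which images of sections become convex, in lieu of MTW---is therefore bypassed entirely.

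The real technical weight sits in what you summarize as ``a stability/compactness argument combined with Caffarelli's interior estimates'': this is the paper's Theorem~\ref{c1alpha}, and it is not a routine invocation. One shows by compactness that $u$ is $C^0$-close to a convex $v$ with $\nabla v_\sharp \mathbf 1_{\mathcal C_1}=\mathbf 1_{\rho\mathcal C_2}$; one then needs that $v$ is a \emph{strictly convex} Alexandrov solution of $\det D^2 v=1$ on a subball (this uses \cite{FK-partial}, since the targets $\mathcal C_2$ are not convex), transfers Pogorelov estimates from $v$ to control the shape of $c$-sections of $u$ and of their $c$-images, rescales, and iterates. Your bootstrap references \cite{LTWC2a,FKM-C1a} are also problematic: both operate under the MTW condition, which is exactly what is absent here. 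The paper instead proves its own comparison principle (Proposition~\ref{comparison}) and $C^{2,\alpha}$ estimate (Theorem~\ref{C2alpha}). The enabling point is that once $u\in C^1$ one has $\partial_c u(x)=\cexp_x(\nabla u(x))$ exactly (Remark~\ref{localtoglobal}); this identity is precisely what can fail without MTW for non-$C^1$ potentials, and recovering it through the $C^{1,\beta}$ step is what allows the higher-order argument to proceed.
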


\begin{theorem}
\label{cor:M}
Let $M$ be a smooth Riemannian manifold, and let $f,g:M\to \R^+$ be two continuous probability densities, locally bounded away from zero and infinity on $M$.
Let $T:M\to M$ denote the optimal transport map for the cost $c=d^2/2$ sending $f$ onto $g$.
Then there exist two closed sets $\Sigma_X,\Sigma_Y\subset M$ of measure zero such that
 $T:M\setminus \Sigma_X \to M\setminus \Sigma_Y$ is a homeomorphism of class $C_{\rm loc}^{0,\beta}$ for any $\beta<1$.
 In addition, if both $f$ and $g$ are of class $C^{k,\alpha}$, then
 $T:M\setminus \Sigma_X \to M\setminus \Sigma_Y$ is a diffeomorphism of class $C_{\rm loc}^{k+1,\alpha}$.
\end{theorem}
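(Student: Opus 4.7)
The plan is to reduce Theorem \ref{cor:M} to Theorem \ref{thm:c} by localizing off the cut locus of $M$, then gluing the local smoothness statements.

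First, I would recall that by McCann's theorem the optimal map is unique and admits the representation $T(x) = \exp_x(\nabla u(x))$ where $u$ is $d^2/2$-concave, hence semi-concave and differentiable $\mathrm{vol}$-almost everywhere. The cost $c = d^2/2$ is $C^\infty$ on the open set $M \times M \setminus \mathrm{Cut}$, where $\mathrm{Cut}$ denotes the cut locus; there it satisfies (C0)-(C3): for (C1)-(C2) one uses that $-D_x c(x,y) = \exp_x^{-1}(y)$ is a diffeomorphism in the injectivity domain of $\exp_x$, and (C3) is the classical non-degeneracy of the mixed Hessian away from the cut locus. I would then verify that
$$
S := \{x \in M : u \text{ not differentiable at } x\} \cup \{x \in M : T(x) \in \mathrm{cut}(x)\}
$$
has measure zero; the first component is immediate from semi-concavity, and the second is a classical fact (Cordero-Erausquin--McCann--Schmuckenschl\"{a}ger, Villani) relying on the semi-concavity of $x \mapsto d(x,y)^2$ and the $c$-monotonicity of the plan. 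Consequently $\mathcal{R} := M \setminus S$ has full measure, and so does $T(\mathcal{R})$ since $g > 0$.

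Next, for each $x_0 \in \mathcal{R}$ with $y_0 := T(x_0)$, I would choose open balls $x_0 \in U_0 \Subset U$ and $y_0 \in V_0 \Subset V$ such that $\overline{U} \times \overline{V} \cap \mathrm{Cut} = \emptyset$, and then work in normal coordinates, identifying $U$ and $V$ with open subsets of $\R^n$. Since $u$ is differentiable at $x_0$, the map $T$ is approximately continuous at $x_0$ with value $y_0$, so after shrinking $U_0$ the set $T^{-1}(V_0) \cap U_0$ has density $1$ at $x_0$. By $c$-cyclical monotonicity, the restriction of the transport plan to $U_0 \times V_0$ is optimal between its own marginals, which are (after a minor adjustment of the domains) continuous densities bounded away from zero and infinity on open subsets of $\R^n$ for a cost satisfying (C0)-(C3). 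Theorem \ref{thm:c} applied to this local problem produces relatively closed singular sets of measure zero outside of which $T$ is a local diffeomorphism of the claimed regularity.

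Finally I would define $\Sigma_X$ to be the complement in $M$ of the union of all such open neighborhoods on which $T$ is a local diffeomorphism onto an open subset of $M$ with the stated regularity, and $\Sigma_Y$ symmetrically; these sets are closed by construction and of measure zero because their complements cover $\mathcal{R}$ and $T(\mathcal{R})$ respectively. The main obstacle is the middle step: carefully arranging that the restricted optimal transport problem fits the hypotheses of Theorem \ref{thm:c}, given that $T$ is only measurable a priori and the preimage under $T$ of an open set need not be open. The necessary ingredients---negligibility of the cut locus, approximate continuity of $T$ at differentiability points of $u$, and stability of $c$-cyclical monotonicity under restriction---are classical, so the reduction, while technical, is essentially routine once Theorem \ref{thm:c} is available.
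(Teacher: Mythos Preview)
Your overall strategy---localize off the cut locus and reduce to the Euclidean result---is the right one and matches the paper. The gap is exactly where you flag it: you cannot apply Theorem~\ref{thm:c} as a black box to the restricted transport problem, and this step is not ``essentially routine.'' Theorem~\ref{thm:c} is stated for densities bounded away from zero and infinity on \emph{open} sets $X,Y\subset\R^n$. When you restrict the plan to $U_0\times V_0$, the marginals are $f\,\1_A$ and $g\,\1_B$ with $A=U_0\cap T^{-1}(V_0)$ and $B=T(A)$ merely measurable; even after using upper semicontinuity of $\partial^- u$ at the differentiability point $x_0$ to arrange $A=U_0$ up to a null set, the image $B=T(U_0)$ is only measurable, and there is no mechanism for turning it into an open set on which $g$ is bounded below. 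A further issue is that the potential for the restricted problem need not be $u|_{U_0}$: the global $u$ is a supremum over \emph{all} $y\in M$, including $y\notin V_0$, so $u|_{U_0}$ may fail to be $(c|_{U_0\times V_0})$-convex.

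The paper does not apply Theorem~\ref{thm:c} to a local subproblem; it re-runs the \emph{proof} of Theorem~\ref{thm:c} in a local chart. That proof is intrinsically pointwise: at each $\bar x$ where $u$ is \emph{twice} differentiable (this second-order information, stronger than the once-differentiability you use, is what drives the affine normalization and the rescaling making $u_\rho$ close to $|x|^2/2$ and $c_\rho$ close to $-x\cdot y$), one feeds directly into Theorems~\ref{c1alpha} and~\ref{C2alpha}. Twice-differentiability of $u$ at $\bar x$ also guarantees $T_u(\bar x)\notin{\rm cut}(\bar x)$ (Cordero-Erausquin--McCann--Schmuckenschl\"ager), so one can take a chart in which $c$ is smooth near $(\bar x,T_u(\bar x))$ and the argument of Section~\ref{sect:loc} goes through verbatim, without ever setting up an auxiliary optimal transport problem on ill-shaped domains.
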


The paper is structured as follows:
 in the next section we introduce some notation and preliminary results.
 Then, in Section \ref{sect:loc}, we show how both Theorem \ref{thm:c} and Theorem \ref{cor:M}
 are a direct consequence of some local regularity results around differentiability points of $T$, see Theorems \ref{c1alpha} and \ref{C2alpha}.
Finally, Sections \ref{sect:C1a} and \ref{sect:C2a} are devoted to the proof of these local results.\\

\textit{Acknowledgements:} We wish to thank Luigi Ambrosio for his careful reading of a preliminary
version of this manuscript. AF is partially supported by NSF Grant DMS-0969962.
Both authors acknowledge the support of the ERC ADG Grant GeMeThNES.

\section{Notation and preliminary results}
Through a well established procedure, maps that solve optimal
transport problems derive from a $c$-convex
potential,  itself solution to a Monge-Amp\`ere type equation.

More precisely, given a cost function $c:X\times Y\to \R$, a function $u:X\to \R$ is said \textit{$c$-convex} if it can be written as 
\begin{equation}
\label{eq:cconvex}
u(x)=\sup_{y \in Y} \left\{-c(x,y)+\lambda_y\right\},
\end{equation}
for some constants $\lambda_y \in \R \cup\{-\infty\}$.

Similarly to the subdifferential for convex function, for $c$-convex functions one can talk about their $c$-subdifferential:
if $u:X\to \R$ is a $c$-convex function as above, the {\em $c$-subdifferential} of $u$ at $x$ is the
(nonempty) set
\begin{equation}
\label{eq:csubdiff}
\p_c u(x) := \{y \in \overline Y : \ u(z)\geq -c(z,y)+c(x,y)+u(x) \quad \forall\,z \in X\}.
\end{equation}
If \(x_{0}\in X\) and $y_{0} \in \p_cu(x_{0})$, we will say that the function 
\begin{equation}
\label{eqCxy0}
C_{x_{0},y_{0}}(\cdot):=-c(\cdot,y_{0})+c(x_{0},y_{0})+u(x_{0})
\end{equation}
 is a \textit{$c$-support} for $u$ at $x_{0}$.
 We also define the \textit{Frechet subdifferential }of $u$ at $x$ as
$$
\p^- u(x) := \bigl\{p \in \R^n : \ u(z) \geq u(x)+p \cdot (z-x)+o(|z-x|)\bigr\}.
$$
We will use the following notation: if $E\subset X$ then
$$
\p_cu(E):=\bigcup_{x\in E}\p_cu(x),\qquad \p^-u(E):=\bigcup_{x\in E}\p^-u(x).
$$
It is easy to check that, if $c$ is of class $C^1$, then the following inclusion holds:
\begin{equation}
\label{eq:rel subdiff}
y \in \p_cu(x) \quad \Longrightarrow\quad -D_xc(x,y) \in \p^- u(x).
\end{equation}
 In addition, if $c$ satisfies {\bf (C0)}-{\bf (C2)}, then we can define the \textit{$c$-exponential map}:
\begin{equation}
\label{eq:cexp}
\text{for any $x\in X$, $y \in Y$, $p \in \R^n$},
\qquad\left\{
\begin{array}{l}
\cexp_x(p)=y \quad \Leftrightarrow \quad p=-D_xc(x,y)\\
\cstarexp_y(p)=x \quad \Leftrightarrow \quad p=-D_yc(x,y)
\end{array}
\right.
\end{equation}
Using \eqref{eq:cexp}, we can rewrite \eqref{eq:rel subdiff} as
\begin{equation}
\label{eq:rel subdiff2}
\p_cu(x) \subset \cexp_x\left(\p^- u(x)\right).
\end{equation}
Notice that, if $c \in C^1$ and $Y$ is bounded,
it follows immediately from \eqref{eq:cconvex} that $c$-convex functions are Lipschitz, so
in particular they are differentiable a.e.

The following notation will be convenient: given a $c$-convex function $u:X\to \R$, we define (at almost every point) the map $T_u:X \to Y$ as
\begin{equation}
\label{eq:Tu}
T_u(x):= \cexp_x(\n u(x)).
\end{equation}
(Of course $T_u$ depends also on $c$, but to keep the notation lighter we prefer not to make this dependence  explicit.
The reader should keep in mind that, whenever we write $T_u$, the cost $c$ is always the one for which $u$ is $c$-convex.)

Finally, let us observe that if $c$ satisfies {\bf (C0)} and $Y$ is bounded, then 
it follows from \eqref{eq:cconvex}
that $u$ is semiconvex (i.e., there exists a constant $C>0$ such that $u+C|x|^2/2$ is convex,
see for instance \cite{FF}).
In particular, by Alexandrov's Theorem, $c$-convex functions are twice differentiable a.e.
(see \cite[Theorem 14.25]{V} for a list of different equivalent definitions of this notion).

The following is a basic result in optimal transport theory (see for instance \cite[Chapter 10]{V}):
\begin{theorem}\label{cbrenier}
Let $c:X\times Y \to \R$ satisfy {\bf (C0)}-{\bf (C1)}.
Given two probability densities $f$ and $g$ supported on $X$ and $Y$ respectively,
there exists a $c$-convex function $u:X\to \R$ such that $T_u:X\to Y$ is the unique optimal transport map sending $f$ onto $g$.
\end{theorem}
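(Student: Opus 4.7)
The plan is to follow the classical Kantorovich duality approach for the optimal transport problem with a continuous cost. First, I would invoke Kantorovich duality: since $X$ and $Y$ are bounded and $c$ is continuous on $X\times Y$ (even $C^2$ by (C0)), the primal problem of minimizing $\int c\,d\pi$ over transport plans between $f\,dx$ and $g\,dy$ admits a minimizer $\pi^*$, and the dual problem admits an optimizer. A standard double $c$-convexification argument (or, equivalently, $c$-cyclical monotonicity of $\spt(\pi^*)$) then produces a $c$-convex function $u:X\to \R$ in the sense of \eqref{eq:cconvex}, together with its $c$-transform $u^c$ on $\overline Y$, such that $u(x)+u^c(y)\le -c(x,y)$ everywhere with equality on $\spt(\pi^*)$. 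Rewriting the equality at any $(x_0,y_0)\in\spt(\pi^*)$ yields precisely $y_0\in \p_c u(x_0)$, so every optimal plan is concentrated on the graph of the $c$-subdifferential of $u$.

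Next I would extract a map from $\p_c u$ via single-valuedness at differentiability points. Since $Y$ is bounded and $c\in C^1$, the $c$-convex function $u$ is Lipschitz on $X$, hence differentiable a.e.\ by Rademacher's theorem. At any differentiability point $x$ of $u$, if $y\in \p_c u(x)$ then \eqref{eq:rel subdiff} gives $-D_x c(x,y)=\n u(x)$; by the injectivity hypothesis (C1) this equation uniquely determines $y$, and the definitions \eqref{eq:cexp}-\eqref{eq:Tu} identify it as $y=\cexp_x(\n u(x))=T_u(x)$. Since $f$ is a density (absolutely continuous with respect to Lebesgue measure), $u$ is differentiable $f$-a.e., so every optimal plan $\pi^*$ must be concentrated on the graph of $T_u$. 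This forces $\pi^*=(\Id\times T_u)_{\#}(f\,dx)$, which simultaneously shows that $T_u$ pushes $f\,dx$ onto $g\,dy$, that it realizes the infimum of the transport cost, and that it is the unique optimal transport map (any other optimizer would coincide with $T_u$ $f$-a.e.\ by the same pointwise argument).

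The main obstacle, as usual in this circle of ideas, is the duality step: producing the $c$-convex potential $u$ whose $c$-subdifferential contains $\spt(\pi^*)$. This is a well-established tool, proved either through the dual maximization formulation followed by a $c$-convexification, or through $c$-cyclical monotonicity combined with a Rockafellar-type selection; full details can be found in standard references such as Villani's book (cited as \cite{V} in the introduction). Once $u$ is in hand, the remaining steps are purely pointwise and use only (C0)-(C1); the assumption (C2) plays no role here and would only be needed if one wished to recover the inverse map $T_u^{-1}$ from a dual potential on $Y$.
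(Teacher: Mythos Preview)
The paper does not prove Theorem~\ref{cbrenier}; it simply records it as a known result and refers the reader to \cite[Chapter 10]{V}. Your proposal is therefore not comparable to any argument in the paper itself, but it is a correct outline of the standard proof found in that reference: Kantorovich duality (or $c$-cyclical monotonicity) to produce a $c$-convex potential $u$, Lipschitz/Rademacher to get a.e.\ differentiability, and {\bf (C1)} to force single-valuedness of $\p_c u$ and hence uniqueness of the optimal plan. Your remark that {\bf (C2)} is not needed here is also accurate.
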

In the particular case $c(x,y)=-x\cdot y$ (which is equivalent to the quadratic cost $|x-y|^2/2$), $c$-convex functions
are convex and the above result takes the following simple form \cite{brenier}:
\begin{theorem}
\label{thm:Brenier}
Let $c(x,y)=-x\cdot y$.
Given two probability densities $f$ and $g$ supported on $X$ and $Y$ respectively,
there exists a convex function $v:X\to \R$ such that $T_v=\nabla v:X\to Y$  is the unique optimal transport map sending $f$ onto $g$.
\end{theorem}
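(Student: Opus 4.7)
The plan is to deduce Theorem \ref{thm:Brenier} directly from the general statement Theorem \ref{cbrenier} by specializing the $c$-convex framework to $c(x,y) = -x\cdot y$. The first observation I would record is that the minimization of $\int |x-y|^2/2 \, d\pi$ and of $\int (-x\cdot y)\, d\pi$ over couplings $\pi$ with fixed marginals $f\,dx$ and $g\,dy$ differ only by the constant $\int |x|^2/2\, f\,dx + \int |y|^2/2\, g\,dy$, so the two problems share the same optimizers and one may as well work with the cost $-x\cdot y$, to which Theorem \ref{cbrenier} applies.

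Next I would identify the objects in Theorem \ref{cbrenier} under this choice of cost. The defining formula \eqref{eq:cconvex} becomes $u(x) = \sup_{y \in Y}\{x\cdot y + \lambda_y\}$, i.e.\ a supremum of affine functions, and hence $u$ is convex; conversely, every proper lower semicontinuous convex function on $X$ admits such a representation by Legendre-Fenchel duality, so the class of $c$-convex functions coincides with the class of convex functions. Moreover $-D_xc(x,y) = y$, which trivially verifies (C1) and identifies the $c$-exponential map defined in \eqref{eq:cexp} with the identity, $\cexp_x(p) = p$. Substituting into \eqref{eq:Tu} gives $T_u(x) = \nabla u(x)$. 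Since on any bounded pair $(X,Y)$ the cost is $C^\infty$ with finite $C^2$ norm, (C0) also holds.

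Applying Theorem \ref{cbrenier} then produces a convex function $v : X \to \R$ whose gradient $\nabla v$ is the unique optimal map pushing $f$ to $g$, which is exactly the content of Theorem \ref{thm:Brenier}. The only mild obstacle I foresee is that (C0) implicitly requires boundedness of $X\times Y$, whereas Theorem \ref{thm:Brenier} is stated without that hypothesis; this is bypassed either by restricting attention to large bounded open neighborhoods of $\spt(f)$ and $\spt(g)$ (the problem is unchanged once the densities are supported there), or by arguing directly via $c$-cyclical monotonicity: an optimal Kantorovich plan for $-x\cdot y$ is supported on a classically cyclically monotone set, which by Rockafellar's theorem lies in the graph of the subdifferential of a convex function $v$, and this subdifferential is single-valued $f$-a.e.\ because $f$ is absolutely continuous with respect to Lebesgue measure.
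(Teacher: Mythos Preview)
Your proposal is correct and matches the paper's treatment: the paper does not give an independent proof of Theorem~\ref{thm:Brenier} but simply records it as the specialization of Theorem~\ref{cbrenier} to $c(x,y)=-x\cdot y$, noting that in this case $c$-convex functions are convex and citing \cite{brenier}. Your write-up spells out precisely this reduction (identifying $\cexp_x=\Id$ and $T_u=\nabla u$), and your remark on the boundedness hypothesis in {\bf (C0)} is a reasonable clarification that the paper leaves implicit.
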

Although on Riemannian manifolds the cost function $c=d^2/2$ is not smooth everywhere, one can still prove existence of optimal maps \cite{McC,FF,FG}
(let us remark that, in this case, the $c$-exponential map coincides with the classical exponential map in Riemannian geometry):
\begin{theorem}
\label{thm:M}
Let $M$ be a smooth Riemannian manifold, and $c=d^2/2$.
Given two probability densities $f$ and $g$ supported on $M$,
there exists a $c$-convex function $u:M\to \R\cup\{+\infty\}$ such that
$u$ is differentiable $f$-a.e., and $T_u(x)=\exp_x(\n u(x))$  is the unique optimal transport map sending $f$ onto $g$.
\end{theorem}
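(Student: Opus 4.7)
The plan is to combine Kantorovich duality with the semiconcavity properties of the squared Riemannian distance. First I would apply the Kantorovich formulation of optimal transport on $M\times M$ (which makes sense since $c=d^2/2$ is continuous) to obtain an optimal transport plan $\pi$ between $f$ and $g$ together with a pair of $c$-conjugate Kantorovich potentials $(u,u^c)$. By standard duality, $\pi$ is concentrated on the contact set $\{(x,y)\in M\times M:u(x)+u^c(y)=-c(x,y)\}$, and on this set one has $y\in\p_c u(x)$. In particular, $u$ is $c$-convex in the sense of \eqref{eq:cconvex}.

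The second step is to establish that $u$ is locally semiconvex on $M$. The crucial Riemannian fact is that for every compact $K\subset M$ there is a constant $C_K$ such that $x\mapsto c(x,y)=d(x,y)^2/2$ is $C_K$-semiconcave on $K$, uniformly in $y\in M$ (this holds even across the cut locus of $y$). Writing $u(x)=\sup_y\{-c(x,y)+\lambda_y\}$ exhibits $u$ as a supremum of uniformly semiconvex functions, hence $u$ itself is locally semiconvex on $M$. As a consequence, $u$ is differentiable outside a set of $\sigma$-finite $(n-1)$-dimensional Hausdorff measure, and in particular $f$-a.e., since $f$ is absolutely continuous with respect to the volume measure.

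The third step identifies the map. At a differentiability point $x$ of $u$, pick any $y\in\p_c u(x)$: the function $z\mapsto -c(z,y)+c(x,y)+u(x)$ is a $c$-support for $u$ at $x$, so by elementary calculus $-D_x c(x,y)=\n u(x)$ (whenever $c(\cdot,y)$ is differentiable at $x$). For $c=d^2/2$, the set of $y$ for which $c(\cdot,y)$ fails to be differentiable at $x$ has zero volume, and a short argument using the semiconcavity of $c(\cdot,y)$ together with differentiability of $u$ shows that in fact $c(\cdot,y)$ must be differentiable at $x$ for any $y\in\p_c u(x)$, yielding $y=\exp_x(\n u(x))$. Thus $\p_c u(x)$ is a singleton at $f$-a.e.\ $x$, which forces $\pi=(\Id\times T_u)_\#f$ with $T_u(x)=\exp_x(\n u(x))$.

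Finally, uniqueness of the optimal map follows from the standard convex-combination argument: given two optimal plans both concentrated on graphs, their half-sum is optimal and hence also concentrated on a graph, which forces the two plans to coincide $f$-a.e. The main obstacle in the whole scheme is the loss of smoothness of $c=d^2/2$ across the cut locus; the work is in showing that uniform semiconcavity of $c(\cdot,y)$ nonetheless persists, and that differentiability of $u$ at $x$ is enough to single out $y=\exp_x(\n u(x))$ despite this loss of smoothness. Once these Riemannian facts are in hand, the argument parallels Brenier's Euclidean proof with $\n u$ replaced by $\exp_x(\n u)$.
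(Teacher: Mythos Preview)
The paper does not give its own proof of this theorem: it is stated as a background result and attributed to McCann \cite{McC} (compact case) and Fathi--Figalli \cite{FF}, Figalli--Gigli \cite{FG} (non-compact case). Your sketch is correct and follows exactly the strategy of those references --- Kantorovich duality to obtain a $c$-convex potential, local semiconvexity of $u$ deduced from the uniform local semiconcavity of $x\mapsto d(x,y)^2/2$, identification $y=\exp_x(\nabla u(x))$ at differentiability points via the sub/super-differential argument for semiconcave functions, and uniqueness by the convex-combination trick --- so there is nothing in the paper to compare against.

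The one point worth flagging is that your key Riemannian input, namely that $d(\cdot,y)^2/2$ is $C_K$-semiconcave on each compact $K$ uniformly in $y\in M$, is nontrivial when $M$ is non-compact (the constant must not blow up as $y\to\infty$); this is precisely the content of \cite{FF,FG}, and you correctly single it out as the main obstacle. A related subtlety you gloss over is that on a non-compact manifold the potential may take the value $+\infty$, and one must argue that $u$ is finite and locally semiconvex on the interior of $\{u<+\infty\}$, which contains $\spt f$; again this is handled in the cited references.
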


We conclude this section by recalling that $c$-convex functions arising in optimal transport problems solve a Monge-Amp\`ere type equation almost everywhere,
referring to  \cite[Section 6.2]{AGS}, \cite[Chapters 11 and 12]{V}, and \cite{F-bourbaki} for more details.

Whenever $c$ satisfies {\bf (C0)}-{\bf (C3)}, then the transport condition $(T_u)_\sharp f=g$ gives
\begin{equation}
\label{eq:detT}
|\det(DT_u(x))|=\frac{f(x)}{g(T_u(x))}\qquad \text{a.e.}
\end{equation}
In addition, the $c$-convexity of $u$ implies that, at every point $x$ where $u$ is twice differentiable,
\begin{equation}
\label{eq:pos def}
D^2u(x)+D_{xx}c\bigl(x,\cexp_x(\nabla u(x))\bigr) \geq 0.
\end{equation}
Hence, writing \eqref{eq:Tu} as
$$
-D_xc(x,T_u(x))=\nabla u(x),
$$
differentiating the above relation with respect to $x$,
and using \eqref{eq:detT} and \eqref{eq:pos def}, we obtain 
\begin{equation}
\label{eq:MA}
\det\Bigl(D^2u(x)+D_{xx}c\bigl(x,\cexp_x(\nabla u(x))\bigr) \Bigr)=\left|\det\left(D_{xy}c\bigl(x,\cexp_x(\nabla u(x))\bigr) \right) \right| \frac{f(x)}{g(\cexp_x(\nabla u(x)))}
\end{equation}
at every point $x$ where $u$ it is twice differentiable.
In particular, when $c(x,y)=-x\cdot y$, the convex function $v$ provided by Theorem \ref{thm:Brenier}
solves the classical Monge-Amp\`ere equation
$$
\det\bigl(D^2v(x) \bigr)=\frac{f(x)}{g(\nabla v(x))}\qquad \text{a.e.}
$$

\section{The localization argument and proof of the results}\label{sect:loc}

The goal of this section is to prove Theorems \ref{thm:c}
and \ref{cor:M} by showing that the 
assumptions of Theorems \ref{c1alpha} and \ref{C2alpha} below are satisfied near almost every point.

The rough idea is the following: if $\bar x$ is a point where the semiconvex function $u$
is twice differentiable, then around that point $u$ looks like a parabola. In addition, by looking close enough to $\bar x$,
the cost function $c$ will be very close to the linear one and the densities will be almost constant there.
Hence we can apply Theorem \ref{c1alpha} to
deduce that $u$ is of class $C^{1,\beta}$ in neighborhood of $\bar x$
(resp. $u$ is of class $C^{k+2,\alpha}$ by Theorem \ref{C2alpha}, if $c \in C_{\rm loc}^{k+2,\alpha}$ and $f,g\in C_{\rm loc}^{k,\alpha}$),
which implies in particular that $T_u$  is of class $C^{0,\beta}$ in neighborhood of $\bar x$
(resp. $T_u$ is of class $C^{k+1,\alpha}$ by Theorem \ref{C2alpha}, if $c \in C_{\rm loc}^{k+2,\alpha}$ and $f,g\in C_{\rm loc}^{k,\alpha}$).
Being our assumptions completely symmetric in $x$ and $y$, we can apply the same argument to the optimal map $T^*$ sending $g$ onto $f$.
Since $T^*=(T_u)^{-1}$ (see the discussion below), it follows that $T_u$ is a global
homeomorphism of class $C_{\rm loc}^{0,\beta}$  (resp. $T_u$ is a global diffeomorphism of class $C_{\rm loc}^{k+1,\alpha}$) outside a closed set of measure zero.

We now give a detailed proof.

\begin{proof}[Proof of Theorem \ref{thm:c}]
Let us introduce the ``$c$-conjugate'' of $u$, that is, the function $u^c:Y\to \R$ defined as 
\[
u^{c}(y):=\sup_{x\in X}\big\{-c(x,y)-u(x)\big\}.
\]
Then \(u^{c}\) is \(c^{*}\)-convex, where
\begin{equation}
\label{eq:dual}
 c^{*}(y,x):=c(x,y), \qquad \text{and} \qquad x\in \partial_{c^{*}}u^{c}(y)\quad \Leftrightarrow \quad y\in \pac u(x)
\end{equation}
(see for instance \cite[Chapter 5]{V}). 

Being our assumptions completely symmetric in \(x\) and \(y\), \(c^{*}\) satisfies the same assumptions as $c$.
In particular, by Theorem \ref{cbrenier}, there exists an optimal  map \(T^{*}\)  (with respect to \(c^{*}\)) sending \(g\) onto \(f\). In addition, it is well-known that $T^*$ is actually equal to
\[
T_{u^{c}}(y)=\cstarexp_{y}\big(\nabla u^{c} (y)\big),
\]
and that $T_u$ and $T_{u^c}$ are inverse to each other, that is
\begin{equation}\label{inverse}
 T_{u^{c}}\big(T_{u}(x)\big)=x, \quad T_{u}\big (T_{u^{c}}(y)\big)=y \qquad \text{for a.e. \(x\in X\), \(y\in Y\)}
\end{equation}
(see, for instance, \cite[Remark 6.2.11]{AGS}).

Since semiconvex functions are twice differentiable a.e., there exist sets $X_{1}\subset X,Y_{1}\subset Y$ of full measure 
such that \eqref{inverse} holds for every $x \in X_1$ and $y \in Y_1$,
and in addition $u$ is twice differentiable for every $x \in X_1$ and \(u^{c}\) is twice differentiable for every \(y\in Y_{1}\).  Let us define 
\[
X':=X_{1}\cap (T_{u})^{-1}(Y_{1}).
\]
Using  that \(T_{u}\) transports  \(f\) on \(g\) and that the two densities are bounded away from zero and infinity, we see that \(X'\) is of full measure in \(X\).

We fix a point $\bar x \in X'$. Since $u$ is differentiable at $\bar x$ (being twice differentiable), it follows by \eqref{eq:rel subdiff2} that the set $\p_cu(\bar x)$ is a singleton, namely $\p_cu(\bar x)=\{\cexp_{\bar x}(\nabla u(\bar x))\}$.
Set $\bar y:=\cexp_{\bar x}(\nabla u(\bar x))$. Since $\bar y \in Y_1$ (by definition of $X'$),
\(u^{c}\) is twice differentiable at \(\bar y\) and \(\bar x=T_{u^{c}}(\bar y)\). Up to a translation in the system of coordinates (both in $x$ and $y$)
we can assume that both $\bar x$ and $\bar y$ coincide with the origin $\zero$. 

Let us define
\begin{equation*}
\begin{split}
&\bar u(z):=u(z)-u(\zero)+c(z,\zero)-c(\zero,\zero),\\
&\bar c(z,w):=c(z,w)-c(z,\zero)-c(\zero,w)+c(\zero,\zero),\\
&\bar u^{\bar c}(w):=u^{c}(w)-u(\zero)+c(\zero,w)-c(\zero,\zero).
\end{split}
\end{equation*}
Then $\bar u$ is a $\bar c$-convex function, \(\bar u^{\bar c}\) is its \(\bar c\)-conjugate,  $T_{\bar u}=T_u$, and \(T_{\bar u^{\bar c}}=T_{u^{c}}\), so
in particular $\left(T_{\bar u}\right)_\sharp f=g$ and \(\left(T_{\bar u^{\bar c}}\right)_\sharp g=f\). 
In addition, because by assumption $\zero \in X'$, $\bar u$ is twice differentiable at $\zero$ and \(\bar u^{\bar c}\) is twice differentiable at \(\zero=T_{\bar u}(\zero)\).
Let us define $P:=D^2\bar u(\zero)$, 
and $M:=D_{xy}\bar c(\zero,\zero)$. Then, since $\bar c(\cdot ,\zero)=\bar c(\zero,\cdot)\equiv 0$
and $\bar c \in C^{2,\alpha}$, a Taylor expansion gives
$$
\bar u(z)=\frac{1}{2} Pz\cdot z+o(|z|^2),\qquad \bar c(z,w)=Mz\cdot w+O(|z|^{2+\alpha}+|w|^{2+\alpha}),
$$
Let us observe that, 
since by assumption $f$ and $g$ are bounded away from zero and infinity,
 by {\bf (C3)} and \eqref{eq:MA} applied to $\bar u$ and $\bar c$
we get that $\det(P),\det(M)\neq 0$.
In addition \eqref{eq:pos def} implies that $P$ is a positive definite symmetric matrix.
Hence, we can perform a second change of coordinates: $z \mapsto \tilde z:=P^{1/2}z$, $w \mapsto \tilde w:=-P^{-1/2}M^*w$ ($M^*$ being the transpose of $M$),
so that, in the new variables,
\begin{equation}
\label{eq:change coord}
\tilde u(\tilde z):=\bar u(z)=\frac{1}{2}|\tilde z|^2 +o(|\tilde z|^2),\qquad  \tilde c(\tilde z,\tilde w):=\bar c(z,w)=-\tilde z\cdot \tilde w+O(|z|^{2+\alpha}+|w|^{2+\alpha}).
\end{equation}

By an easy computation it follows that $(T_{\tilde u})_\sharp \tilde f=\tilde g$, where\footnoteremember{footnote:det}{
An easy way to check this is to observe that the measures $\mu:=f(x)dx$ and $\nu:=g(y)dy$ are independent of the choice of coordinates,
hence \eqref{cambiodens}  follows from the identities
$$
f(x)dx=\tilde f(\tilde x)d\tilde x ,\qquad g(y)dy=\tilde g(\tilde y)d\tilde y.
$$}
\begin{equation}\label{cambiodens}
\tilde f(\tilde z):=\det(P^{-1/2})\,f( P^{-1/2}\tilde z),\qquad \tilde g (\tilde w):=\big|\det\big((M^*)^{-1}P^{1/2}\big)\big|\, g( -(M^*)^{-1}P^{1/2}\tilde w).
\end{equation}
Notice that
\begin{equation}\label{eq:good grad}
D_{\tilde z\tilde z}\tilde c(\zero,\zero)=D_{\tilde w\tilde w}\tilde c(\zero,\zero)=\zero_{n\times n},\qquad  -D_{\tilde z\tilde w}\tilde c(\zero,\zero)=\Id, \qquad D^2\tilde u(\zero)=\Id,
\end{equation}
so, using  \eqref{eq:MA}, we deduce that
\begin{equation}\label{uno}
\frac {\tilde f(\zero)}{\tilde g(\zero)}=\frac{\det\big( D^{2}\tilde u(\zero)+D_{\tilde z\tilde z}\tilde c(\zero,\zero)\big)}{\bigl|\det \bigl(D_{\tilde z\tilde w}\tilde c(\zero,\zero)\bigr)\bigr|}=1.
\end{equation}

To  ensure that we can apply Theorems \ref{c1alpha} and \ref{C2alpha}, we now perform the following dilation: for $\rho>0$ we define
$$
u_\rho(\tilde z):= \frac{1}{\rho^2} \tilde u(\rho\tilde z),\qquad  c_\rho(\tilde z,\tilde w):=\frac{1}{\rho^2}\tilde c(\rho\tilde z,\rho\tilde w).
$$
We claim that, provided $\rho$ is sufficiently small, $u_\rho$ and $c_\rho$ satisfy the assumptions of Theorems \ref{c1alpha} and \ref{C2alpha}.

Indeed, it is immediate to check that $u_\rho$ is a $c_\rho$-convex function.
Also, by the same argument as above,
from the relation $(T_{\tilde u})_\sharp \tilde f=\tilde g$ we deduce that $T_{u_\rho}$ sends
$\tilde f(\rho\tilde z)$ onto $\tilde g(\rho \tilde w)$.
In addition, since we can freely multiply both densities by a same constant, it actually follows from 
\eqref{uno} that $(T_{u_\rho})_\sharp f_\rho=g_\rho$, where
$$
f_{\rho}(\tilde z):=\frac{\tilde f(\rho\tilde z)}{\tilde f(\zero)},\qquad g_\rho (\tilde w):=\frac{\tilde g(\rho \tilde w)}{\tilde g(\zero)}.
$$
In particular, since $f$ and $g$ are continuous, we get
\begin{equation}
\label{eq:fgrho}
|f_\rho -1|+ |g_{\rho}-1| \to 0\qquad \text{inside $B_{3}$}
\end{equation}
as $\rho \to 0$. 
Also, by \eqref{eq:change coord} we get that, for any $\tilde z,\tilde w \in B_3$,
\begin{equation}
\label{eq:almost linear}
u_{\rho}(\tilde z)=\frac{1}{2}|\tilde z|^2 +o(1),\qquad  c_\rho(\tilde z,\tilde w)=-\tilde z\cdot \tilde w+O(\rho^\alpha|z|^{2+\alpha}+\rho^\alpha|w|^{2+\alpha}).,
\end{equation}
where $o(1)\to 0$ as $\rho \to 0$.
In particular, it follows easily that \eqref{eq:costconv2} and \eqref{eq:parabola} hold with any positive constants
$\delta_0,\eta_0$ provided $\rho$ is small enough.


Furthermore, by the second order differentiability of $\tilde u$ at $\zero$ it follows that
the multivalued map $\tilde z \mapsto \p^-\bar u(\tilde z)$ is differentiable at $\zero$ (see \cite[Theorem 14.25]{V})
with gradient equal to the identity matrix (see \eqref{eq:change coord}),
hence
$$
\partial^-u_\rho(\tilde z)\subset B_{\gamma_\rho}(\tilde z) \qquad \forall\, \tilde z \in B_2,
$$
where $\gamma_\rho \to 0$ as $\rho \to 0$.
Since $\partial_{c_\rho}u_\rho \subset \crhoexp\left( \partial^-u_\rho\right)$ (by \eqref{eq:rel subdiff2}) and $\|\crhoexp -\Id\|_{\infty}=o(1)$ (by \eqref{eq:almost linear}),
we get
\begin{equation}\label{cdiff}
\partial_{c_\rho}u_\rho(\tilde z)\subset B_{\delta_\rho}(\tilde z) \qquad \forall\, \tilde z \in B_3,
\end{equation}
with \(\delta_{\rho}=o(1)\) as \(\rho \to 0\). Moreover, the \(c_{\rho}\)-conjugate of \(u_{\rho}\) is easily seen to be
\[
u_{\rho}^{c_{\rho}}(\tilde w)=\bar u ^{\bar c }\big (\rho(M^*)^{-1}P^{1/2}\tilde w\big).
\]
Since \(u^{c}\) is twice differentiable at \(\zero\), so is \( u_{\rho}^{c_{\rho}}\). In addition,
an easy computation \footnote{For instance, this follows by differentiating both relations
$$
D_{\tilde z}c_\rho\big(\tilde z,T_{u_\rho}(\tilde z)\big)=-\nabla u_\rho(\tilde z)\quad\text{and}
\quad D_{\tilde w}c_\rho\big(T_{u_\rho^{c_\rho}}(\tilde w),\tilde w\big)=-\nabla u_\rho^{c_\rho}(\tilde w)
$$
at \(\zero\), and 
using then \eqref{eq:good grad}
and the fact that
\(\nabla T_{u_{\rho}^{c_{\rho}}}(\zero)=[\nabla T_{u_{\rho}}(\zero)]^{-1}$ and $D^2u_\rho(\zero)=\Id$.}
shows that \(D^{2}u_{\rho}^{c_{\rho}}(\zero)=\Id\). Hence, arguing as above we obtain that 
\begin{equation}\label{cstardiff}
\partial_{c^{*}_\rho}u^{c_\rho}_\rho(\tilde w)\subset B_{\delta_\rho'}(\tilde w) \qquad \forall\, \tilde w \in B_3,
\end{equation}
with \(\delta'_{\rho}=o(1)\) as \(\rho \to 0\).

We now define 
\[
\mathcal C_{1}:=\overline{B}_1,\qquad
\mathcal C_{2}:=\partial_{c_{\rho}}u_{\rho}(\mathcal C_1).
\]
Observe that both $\mathcal C_1$ and $\mathcal C_2$ are closed
(since the $c$-subdifferential of a compact set is closed). Also, thanks to \eqref{cdiff}, by choosing \(\rho\) small enough we can ensure that
\(B_{1/3}\subset \mathcal C_{2}\subset B_{3}\). Finally, it follows from \eqref{eq:rel subdiff2}
that
\[
(T_{u_{\rho}})^{-1}(\mathcal C_{2})\setminus \mathcal C_{1} \subset
(T_{u_{\rho}})^{-1}\bigl(\{\text{points of non-differentiability of \(u_{\rho}^{c_\rho}\)}\}\bigr),
\]
and since this latter set has measure zero, a simple computation shows that
\[
\big(T_{u_{\rho}}\big)_{\sharp} (f_\rho\mathbf 1_{\mathcal C_{1}})=g_\rho\mathbf 1_{\mathcal C_{2}}.
\]

Thus, thanks to  \eqref{eq:fg}, we get that for any $\beta<1$ the assumptions of Theorem \ref{c1alpha} are satisfied, provided we choose $\rho$ sufficiently small.
Moreover, if in addition $c \in C_{\rm loc}^{k+2,\alpha}(X\times Y)$,
$f\in C_{\rm loc}^{k,\alpha}(X)$,
and $g\in C_{\rm loc}^{k,\alpha}(Y)$, then also the assumptions of Theorem \ref{C2alpha} are satisfied.

Hence, by applying Theorem \ref{c1alpha} (resp. Theorem \ref{C2alpha}) we deduce that
$u_\rho \in C^{1,\beta}(B_{1/7})$
(resp. $u_\rho \in C^{k+2,\alpha}(B_{1/9})$), so going
back to the original variables we get the existence of a neighborhood $\U_{\bar x}$
of $\bar x$ such that $u \in C^{1,\beta}(\U_{\bar x})$ (resp. $u \in C^{k+2,\alpha}(\U_{\bar x})$).
This implies in particular that $T_u \in C^{0,\beta}(\U_{\bar x})$ (resp. $T_u \in C^{k+1,\alpha}(\U_{\bar x})$). Moreover, it follows by Corollary \ref{open} that
\(T_{u}(\U_{\bar x})\) contains a neighborhood of $\bar y$.

We now observe that, by symmetry, we can also apply Theorem \ref{c1alpha} (resp. Theorem \ref{C2alpha}) to $u_\rho^{c_\rho}$.
Hence, there exists a neighborhood \(\mathcal V_{\bar y}\) of $\bar y$ such that \(T_{u^{c}}\in C^{0,\beta}(\mathcal V_{\bar y})\).
Since $T_u$ and $T_{u^c}$ are inverse to each other (see \eqref{inverse}) we deduce that,
possibly reducing the size of \(\U_{\bar x}\),
$T_u$ is a homeomorphism (resp. diffeomorphism) between $\U_{\bar x}$ and $T_u(\U_{\bar x})$.
Let us consider the open sets
$$
X'':=\bigcup_{\bar x \in X'} \U_{\bar x},\qquad Y'':=\bigcup_{\bar x \in X'} T_u(\U_{\bar x}),
$$
and define the (relatively) closed $\Sigma_X:=X\setminus X''$,  $\Sigma_Y:=Y\setminus Y''$. 
Since $X''\supset X'$, $X''$ is a set of full measure, so $|\Sigma_X|=0$.
In addition, since $\Sigma_Y=Y\setminus Y''\subset Y\setminus T_u( X')$ and $T_u(X')$ has full measure in \(Y\),
we also get that $|\Sigma_Y|=0$.

Finally, since $T_u:X\setminus \Sigma_X\to Y\setminus \Sigma_Y$ is a local homeomorphism (resp. diffeomorphism),
by \eqref{inverse} it follows that $T_u:X\setminus \Sigma_X\to Y\setminus \Sigma_Y$ is a global homeomorphism (resp. diffeomorphism), which concludes the proof.
\end{proof}

\begin{proof}[Proof of Theorem \ref{cor:M}]
The only difference with respect to the situation in Theorem \ref{thm:c} is that now the cost function $c=d^2/2$ is not smooth on the whole $M\times M$.
However, even if $d^2/2$ is not everywhere smooth and $M$ is not necessarily compact, it is still true that the $c$-convex function $u$ provided by Theorem \ref{thm:M} is locally semiconvex (i.e., it is locally semiconvex when seen in any chart) \cite{FF,FG}.
In addition, as shown in \cite[Proposition 4.1]{CMS}
(see also \cite[Section 3]{F}), if $u$ is twice differentiable at $x$, then the point
$T_u(x)$ is not in the cut-locus of $x$.
Since the cut-locus is closed and $d^2/2$ is smooth outside the cut-locus, we deduce the
existence of a set $X$ of full measure such that, if $x_0 \in X$, then: (1) $u$ is twice differentiable at $ x_0$; (2)
there exists a neighborhood $\U_{x_0}\times \mathcal V_{T_u( x_0)}\subset M\times M$ of $( x_0,T_u( x_0))$ such that $c \in C^\infty(\U_{x_0}\times \mathcal V_{T_u( x_0)})$.
Hence, by taking a local chart around $(x_0,T_u(x_0))$, 
the same proof as the one of Theorem \ref{thm:c} shows that $T_u$ is a local homeomorphism (resp. diffeomorphism) around almost every point. 
Using as before that $T_u:M \to M$ is invertible a.e., it follows that $T_u$ is a global homeomorphism (resp. diffeomorphism) outside a closed singular set 
of measure zero.
We leave the details to the interested reader.
\end{proof}

\section{\(C^{1,\beta}\) regularity and strict \(c\)-convexity}\label{sect:C1a}

In this and the next section we prove that, if in some open set a $c$-convex function $u$ is sufficiently close to a parabola and the cost function is close to the linear one,
then $u$ is smooth in some smaller set.

The idea of the proof (which is reminiscent of the argument introduced by Caffarelli in \cite{Caf4} to show $W^{2,p}$ and $C^{2,\alpha}$ estimates for the classical Monge-Amp\`ere equation,
though several additional complications arise in our case) is the following: since the cost function is close to the linear one
and both densities are almost constant, $u$ is close to a convex function $v$ solving an optimal transport problem with linear cost and constant densities
(Lemma \ref{lem:compact}). In addition, since $u$ is close to a parabola, so is $v$. Hence, by 
\cite{FK-partial} and Caffarelli's regularity theory, $v$ is smooth,
and we can use this information to deduce that $u$ is even closer to a second parabola (given by the second order Taylor expansion of $v$ at the origin) inside a small neighborhood around of origin.
By rescaling back this neighborhood at scale $1$ and iterating this construction, we obtain that $u$ is $C^{1,\beta}$ at the origin for some $\beta \in (0,1)$.
Since this argument can be applied at every point in a neighborhood of the origin, we deduce that $u$ is $C^{1,\beta}$ there, see Theorem \ref{c1alpha}.
(A similar strategy has also been used in \cite{CMN} to show regularity optimal transport maps for the cost $|x-y|^p$, either when $p$ is close to $2$
or when $X$ and $Y$ are sufficiently far from each other.)

Once this result is proved, we know that $\partial^-u$ is a singleton at every point, so it follows from \eqref{eq:rel subdiff2} that
$$
\partial_cu(x) = \cexp_x (\partial^-u(x)),
$$
see Remark \ref{localtoglobal} below.
(The above identity is exactly what in general may fail for general $c$-convex functions, unless the MTW condition holds \cite{Loe1}.)
Thanks to this fact, we obtain that $u$ enjoys a comparison principle (Proposition \ref{comparison}),
and this allows us to use a second approximation argument with solutions of the classical Monge-Amp\`ere equation
(in the spirit of \cite{Caf4,JW}) to conclude that $u$ is $C^{2,\sigma'}$ in a smaller neighborhood, for some $\sigma'>0$.
Then higher regularity follows from standard elliptic estimates, see Theorem \ref{C2alpha}.\\

\begin{lemma}\label{lem:compact}Let \(\C_{1}\) and \(\C_{2}\) be two closed sets such that
\begin{equation}\label{eq:frapalle}
B_{1/K}\subset \C_{1}, \C_{2}\subset B_{K}
\end{equation}
for some $K\geq 1$, $f$ and $g$ two densities supported respectively in $\C_1$ and $\C_2$,
and \(u: \C_{1} \to \R\) a \(c\)-convex function such that $\p_cu(\C_1)\subset B_K$ and \((T_{u})_\sharp f =g \).
Let $\rho>0$ be such that $|\C_1|=|\rho\, \C_2|$ (where $\rho\, \C_2$ denotes the dilation of $\C_2$ with respect to the origin),
and let \(v\) be a convex function such that \(\nabla v_{\sharp }\1_{\C_{1}}
=\1_{\rho \C_{2}}\) and \(v(\zero)=u(\zero)\). Then there exists an increasing function $\omega:\R^+\to \R^+$, depending only \(K\), and satisfying
\(\omega(\delta)\ge \delta \) and \(\omega(0^{+})=0\), such that, if
\begin{equation}\label{eq:ll}
\|f- \1_{\C_{1}} \|_{\infty}+ \| g- \1_{\C_{2}}\|_{\infty} \leq {\delta}
\end{equation} 
and
 \begin{equation}\label{eq:costconv}
 \|c(x,y)+x\cdot y\|_{C^{2}(B_{K}\times B_{K})}\le \delta,
 \end{equation}
 then
\[
 \|u-v\|_{C^{0}(B_{1/K})}\le \omega(\delta).
 \] 
  \end{lemma}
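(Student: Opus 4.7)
The plan is a compactness and contradiction argument. Define
$$\omega(\delta):=\max\Bigl\{\delta,\ \sup\|u-v\|_{C^0(B_{1/K})}\Bigr\},$$
the supremum ranging over all septuples $(\C_1,\C_2,f,g,c,u,v)$ that satisfy the hypotheses with the given $K$ and $\delta$. Monotonicity and the bound $\omega(\delta)\ge\delta$ are built in; the content of the lemma is $\omega(0^+)=0$. Suppose instead there exist $\varepsilon_0>0$, $\delta_k\downarrow 0$, and admissible data $(\C_1^k,\C_2^k,f_k,g_k,c_k,u_k,v_k,\rho_k)$ with $\|u_k-v_k\|_{C^0(B_{1/K})}\ge\varepsilon_0$. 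Subtracting constants, assume $u_k(\zero)=v_k(\zero)=0$.

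\textbf{Uniform bounds.} From \eqref{eq:costconv}, $c_k\to -x\cdot y$ in $C^2(\overline B_K\times\overline B_K)$, so $u_k$ is semiconvex with modulus $O(\delta_k)$. The inclusion $\partial_{c_k} u_k(\C_1^k)\subset B_K$ combined with \eqref{eq:rel subdiff} forces $|\nabla u_k|\le K+O(\delta_k)$ at every point of differentiability, hence $u_k$ is uniformly Lipschitz on $\overline B_K$. Similarly $v_k$ (a Brenier potential between bounded uniform densities on bounded sets) is uniformly Lipschitz on $\overline B_{1/K}$. By Arzelà--Ascoli and extraction of a subsequence, $u_k\to u_\infty$ and $v_k\to v_\infty$ uniformly on $\overline B_{1/K}$, with both limits convex. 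Passing to a further subsequence, the closed sets $\C_i^k$ converge in Hausdorff distance to $\C_i^\infty\subset \overline B_K$ still containing $B_{1/K}$; mass conservation $\int f_k=\int g_k$ together with \eqref{eq:ll} yields $|\C_1^\infty|=|\C_2^\infty|$, so $\rho_k\to 1$.

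\textbf{Passage to the limit and conclusion.} Because $-D_xc_k(x,y)\to y$ uniformly, the $c_k$-exponentials satisfy $\cexp_{k,x}(p)\to p$ uniformly on $\overline B_K\times\overline B_K$. Uniform convergence of semiconvex functions with semiconvexity modulus $O(\delta_k)\to 0$ implies $\nabla u_k\to\nabla u_\infty$ a.e., whence $T_{u_k}=\cexp_{k,\cdot}(\nabla u_k)\to\nabla u_\infty$ a.e. Combining this with $f_k\to\mathbf 1_{\C_1^\infty}$ and $g_k\to\mathbf 1_{\C_2^\infty}$ a.e., dominated convergence applied to $\int\varphi(T_{u_k}(x))f_k(x)\,dx=\int\varphi(y)g_k(y)\,dy$ for test functions $\varphi\in C_c(\R^n)$ gives
$$(\nabla u_\infty)_\sharp\mathbf 1_{\C_1^\infty}=\mathbf 1_{\C_2^\infty}.$$
The analogous argument for $v_k$ (which already solves this Brenier problem with $\rho_k\to 1$) passes to the limit to give $(\nabla v_\infty)_\sharp \mathbf 1_{\C_1^\infty}=\mathbf 1_{\C_2^\infty}$. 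By the uniqueness part of Theorem \ref{thm:Brenier}, the convex functions $u_\infty$ and $v_\infty$ differ by a constant on the convex open set $B_{1/K}$; since both vanish at $\zero$, they coincide there, contradicting $\|u_k-v_k\|_{C^0(B_{1/K})}\ge\varepsilon_0$.

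\textbf{Main obstacle.} The delicate step is the joint limit in the transport condition, where both the cost $c_k$ and the potential $u_k$ vary. It requires marrying the $C^2$ convergence of the costs (to control simultaneously the semiconvexity modulus of $u_k$ and the $c_k$-exponential map) with the a.e.\ convergence of gradients of uniformly converging semiconvex functions, and then applying dominated convergence carefully since each $T_{u_k}$ is only defined almost everywhere. All the quantitative estimates above depend only on $K$, which is what produces a universal modulus $\omega$ depending on $K$ alone.
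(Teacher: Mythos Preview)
Your overall architecture (compactness/contradiction, uniform Lipschitz bounds, Arzel\`a--Ascoli, uniqueness via Brenier) matches the paper's proof. The genuine gap is in the passage to the limit for the densities. You assert that the closed sets $\C_i^k$ converge in Hausdorff distance to some $\C_i^\infty$ and that $f_k\to\1_{\C_1^\infty}$, $g_k\to\1_{\C_2^\infty}$ almost everywhere. Hausdorff convergence of closed sets implies neither a.e.\ convergence of their characteristic functions nor convergence of their Lebesgue measures; in particular your claim $|\C_1^\infty|=|\C_2^\infty|$ is unjustified. (Think of closed sets in $B_K\setminus B_{1/K}$ that oscillate between two configurations of different measure; the Hausdorff limit can be the whole annulus while $\1_{\C_1^k}$ only converges weak-*, to a function strictly between $0$ and $1$.) Once a.e.\ convergence of $f_k,g_k$ fails, your dominated-convergence argument for the transport condition collapses, since you are trying to pass to the limit in a product where one factor converges only weakly.

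The paper avoids this by working with the weak-* limits $f_\infty,g_\infty$ of $f_k,g_k$ in $L^\infty$ (which need not be characteristic functions) and then invoking the stability theorem for optimal transport plans \cite[Theorem 5.20]{V} to conclude that both $\nabla u_\infty$ and $\nabla v_\infty$ transport $f_\infty$ to $g_\infty$. The key observation that makes Brenier's uniqueness applicable on $B_{1/K}$ is $f_\infty\ge \1_{B_{1/K}}$, which survives because $B_{1/K}\subset\C_1^k$ for all $k$. If you replace your a.e.\ claims by weak-* convergence and either cite such a stability result or prove it directly (using $\Gamma$-convergence of the costs together with compactness of optimal plans), your argument goes through.
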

\begin{proof}
Assume the lemma is false. Then there exists \(\e_{0}>0\),  a sequence  of closed sets \(\C^{h}_{1}\), \(\C^{h}_{2}\) satisfying \eqref{eq:frapalle},
functions \(f_{h}\), \(g_{h}\) satisfying \eqref{eq:ll} with $\delta=1/h$, and costs \(c_{h}\) converging in \(C^{2}\) to \(-x\cdot y\), such that 
$$
u_{h}(\zero)=v_{h}(\zero)=0 \quad \text{and}\quad \|u_{h}-v_{h}\|_{C^{0}(B_{1/K})} \ge \e_{0},
$$
where \(u_{h}\) and \(v_{h}\) are as in the statement. First, we extend \(u_{h}\) an \(v_{h}\) to \(B_{K}\) as
\[
u_{h}(x):=\sup_{ z\in \mathcal C_1^h,\,y\in \p_{c_h}u_h(z)} \big\{u_h(z)-c_{h}(x,y)+c_h(z,y)\big\},\qquad 
v_{h}(x):=\sup_{ z\in \mathcal C_1^h,\,p\in \p^-v_h(z)} \big\{v_h(z)+p \cdot(x-z)\big\}. 
\]
Notice that, since by assumption
\(\partial_{c_{h}} u_{h}(\mathcal C_1^h)\subset B_K\),
we have \(\partial_{c_{h}} u_{h}(B_{K})\subset B_K\).
Also, \((T_{u_h})_\sharp f_h =g_h \) gives that $\int f_h=\int g_h$, so it follows from \eqref{eq:ll} that
$$
\rho_h=\left(|\C_1^h|/|\C_2^h|\right)^{1/n} \to 1 \qquad \text{as $h \to \infty$,}
$$
which implies that $\p^-v_h(B_K) \subset B_{\rho_h K}\subset B_{2K}$ for $h$ large.
Thus, since the \(C^{1}\)-norm of \(c_{h}\) is uniformly bounded,
we deduce that both
\(u_{h}\) and \(v_{h}\) are uniformly Lipschitz.
Recalling that \(u_{h}(\zero)=v_{h}(\zero)=0\), we get that, up to a subsequence,
\(u_{h}\) and \(v_{h}\) uniformly converge inside $B_K$ to \(u_{\infty}\) and \(v_{\infty}\) respectively, where 
\begin{equation}\label{eq:lontane}
u_{\infty}(\zero)=v_{\infty}(\zero)=0 \quad \text{and}\quad \|u_{\infty}-v_{\infty}\|_{C^{0}(B_{1/K})} \ge \e_{0}.
\end{equation}
In addition \(f_{h}\) (resp. \(g_{h}\)) weak-\(*\) converge in \(L^{\infty}\) to some density \(f_{\infty }\) (resp. \(g_{\infty}\)) supported in \(\overline B_K\).
Also, since
$\rho_h \to 1$, using \eqref{eq:ll} we get that \(\1_{\C_1^h}\) (resp. \(\1_{\rho_h\C_2^h}\)) weak-\(*\) converges in \(L^{\infty}\) to \(f_{\infty }\) (resp. \(g_{\infty}\)).
Finally we remark that, because of \eqref{eq:ll} and the fact that $\C_1^h\supset B_{1/K}$, we also have
$$
f_\infty \geq \1_{B_{1/K}}.
$$

In order to get a contradiction we have to show that \(u_{\infty}=v_{\infty}\) in \(B_{1/K}\). 
To see this, we apply \cite[Theorem 5.20]{V} to deduce that
both $\nabla u_\infty$ and $\nabla v_\infty$ are optimal transport maps for the linear cost $-x\cdot y$
sending $f_\infty$ onto $g_\infty$. By uniqueness of the optimal map (see Theorem \ref{thm:Brenier}) we deduce that \(\nabla v_{\infty}=\nabla u_{\infty}\) almost everywhere inside \(B_{1/K}\subset \spt f_{\infty}\),
hence \(u_{\infty}=v_{\infty}\) in \(B_{1/K}\) (since \(u_{\infty}(\zero)=v_{\infty}(\zero)=0\)), contradicting \eqref{eq:lontane}.
\end{proof}

Here and in the sequel, we use $ \mathcal N_r(E)$ to denote the $r$-neighborhood of a set $E$.
\begin{lemma}\label{sottodif} Let \(u\) and \(v\) be, respectively,  \(c\)-convex and convex, let $D \in \R^{n\times n}$ be a symmetric matrix satisfying
\begin{equation}\label{bound}
\Id/K \le D\le K \Id
\end{equation}
for some \(K\ge 1\), and define the ellipsoid
\[
E(x_{0},h):=\big\{x:\   D(x-x_{0})\cdot (x-x_{0})\le h\big\}, \qquad h>0.
\]
Assume that there exist small positive constants $\e,\delta$ such that
\begin{equation}\label{eq:hp-sottodiff}
\|v-u\|_{C^{0}(E(x_{0},h))}\le \e,\qquad \|c+x\cdot y\|_{C^{2}\left(E(x_{0},h)\times \pac u(E(x_{0},h)\right)}\le \delta.
\end{equation}
Then
\begin{equation}\label{eq:sottodifftes1}
\pac u \big(E(x_{0},h-\sqrt \e )\big)\subset \mathcal N_{K'(\delta +\sqrt{h\e})} \bigl(\partial v (E(x_{0},h))\bigr) \qquad \forall\,0< \e <h^2 \le 1,
\end{equation}
where \(K'\) depends only on \(K\).	
\end{lemma}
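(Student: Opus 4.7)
Fix $x\in E(x_{0},h-\sqrt\e)$ and $y\in\partial_{c}u(x)$, and let $C_{x,y}(z):=-c(z,y)+c(x,y)+u(x)$ be the associated $c$-support, so that $u\ge C_{x,y}$ with equality at $x$. Combined with $\|u-v\|_{C^{0}(E(x_{0},h))}\le\e$ from \eqref{eq:hp-sottodiff}, this gives $v-C_{x,y}\ge-\e$ on $E(x_{0},h)$ and $(v-C_{x,y})(x)=v(x)-u(x)\le\e$. The strategy is to minimize a convexified version of $v-C_{x,y}$ over $\overline{E(x_{0},h)}$, using an ellipsoidal barrier that vanishes on $\partial E(x_{0},h)$ to force the minimum to lie strictly inside, and then to read off an element of $\partial v$ from the optimality condition at the interior minimum.

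The principal obstacle is that $v-C_{x,y}$ is not convex, since $C_{x,y}$ inherits the curvature of $-c(\cdot,y)$, whose Hessian by \eqref{eq:hp-sottodiff} satisfies only $D_{zz}^{2}c(z,y)\ge-\delta\,\Id$. The key trick is a quadratic correction: set
\[
\tilde\Psi(z):=v(z)-C_{x,y}(z)+\frac{\delta}{2}|z-x|^{2}.
\]
Its Hessian is $D^{2}v+D_{zz}^{2}c(z,y)+\delta\,\Id\ge 0$ in the sense of distributions, so $\tilde\Psi$ is convex. The correction is nonnegative and vanishes at $x$, so the bounds $\tilde\Psi\ge-\e$ on $E(x_{0},h)$ and $\tilde\Psi(x)\le\e$ are preserved; this keeps the oscillation of $\tilde\Psi$ over $E(x_{0},h)$ of order $\e$, not polluted by $\delta$ times the diameter of $E(x_{0},h)$.

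For a parameter $\lambda>0$ I would then consider the convex function
\[
W(z):=\tilde\Psi(z)+\lambda\bigl(D(z-x_{0})\cdot(z-x_{0})-h\bigr).
\]
The barrier term is nonpositive on $\overline{E(x_{0},h)}$, vanishes on the boundary, and is $\le-\sqrt\e$ at $z=x$ because $D(x-x_{0})\cdot(x-x_{0})\le h-\sqrt\e$. Thus $W|_{\partial E(x_{0},h)}\ge-\e$ while $W(x)\le\e-\lambda\sqrt\e$, so picking $\lambda:=3\sqrt\e$ (any $\lambda>2\sqrt\e$ will do) forces $W(x)<-\e$, and hence the minimum of $W$ over $\overline{E(x_{0},h)}$ is attained at some interior point $z^{*}$.

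At $z^{*}$ one has $0\in\partial W(z^{*})$, and the sum rule for a convex plus smooth function yields the existence of $p^{*}\in\partial v(z^{*})$ with
\[
p^{*}=-D_{z}c(z^{*},y)-\delta(z^{*}-x)-2\lambda D(z^{*}-x_{0}).
\]
The triangle inequality, together with $|D_{z}c(z^{*},y)+y|\le\delta$ (from $\|c+x\cdot y\|_{C^{1}}\le\delta$), $|z^{*}-x|\le 2\sqrt{Kh}$, $|D(z^{*}-x_{0})|\le K^{3/2}\sqrt h$, and $\lambda=3\sqrt\e$, then gives $|p^{*}-y|\le\delta(1+2\sqrt{Kh})+6K^{3/2}\sqrt{h\e}$; using $h\le 1$, this is $\le K'(\delta+\sqrt{h\e})$ for some $K'=K'(K)$, proving the claimed inclusion.
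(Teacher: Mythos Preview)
Your proof is correct and uses essentially the same barrier argument as the paper: perturb by a multiple $\lambda\sim\sqrt\e$ of the quadratic $D(z-x_0)\cdot(z-x_0)-h$ to force an interior contact point in $E(x_0,h)$, then read off a subgradient of $v$ there and estimate its distance to $y$. The only cosmetic differences are that the paper phrases this as sliding $C_{x,y}$ until it touches $\bar v:=v+\e+2\sqrt\e\bigl(D(\cdot-x_0)\cdot(\cdot-x_0)-h\bigr)$ from below and then passes through the $c$-exponential map, whereas you minimize directly and use $|D_zc(\cdot,y)+y|\le\delta$; your extra $\tfrac{\delta}{2}|z-x|^2$ convexification is harmless but unnecessary, since at an interior minimum of $v+g$ with $g$ smooth one automatically has $-\nabla g(z^*)\in\partial^-v(z^*)=\partial v(z^*)$ without any convexity of the sum.
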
	

\begin{proof}
Up to a change of coordinates we can assume that \(x_{0}=\zero\), and to simplify notation we set $E_h:=E(x_0,h)$.
Let us define
 \[
 \bar v(x):=v(x)+\e+2\sqrt\e(Dx \cdot x-h),
 \]
 so that
 \(\bar v \ge u\) outside \( E_{h}\), and \(\bar v\le u\) inside \( E_{h-\sqrt \e}\).
 Then, taking a \(c\)-support to \(u\) in \(E_{h-\sqrt \e}\) (i.e., a function $C_{x,y}$ as in \eqref{eqCxy0},
 with $x \in E_{h-\sqrt \e}$ and $y \in \p_cu(x)$), moving it down and then lifting it up until it touches $\bar v$
 from below, we see that it has to touch the graph of \(\bar v\) at some point \(\bar x \in E_{h} \): in other words \footnote{
 Even if $\bar v$ is not $c$-convex, it still makes sense to consider his $c$-subdifferential (notice that the $c$-subdifferential of $\bar v$ may be empty at some points). In particular, the inclusion $\pac \bar v(x)\subset \cexp_x(\p^-\bar v(x))$ still holds.}
\[
\pac u(E_{h-\sqrt \e})\subset \pac \bar v (E_{h}).
\]
By \eqref{bound} we see that $\diam E_{h}\le  2\sqrt{K h}$,
so by a simple computation (using again \eqref{bound}) we get
\[
\partial^{-} \bar v (E_{h})\subset \mathcal N_{4K\sqrt  {Kh\e} }\bigl(\partial^{-}  v (E_{h})\bigr).
\]
Thus, since $\pac \bar v (E_{h}) \subset \cexp\bigl(\partial^- \bar v (E_{h})\bigr)$
(by \eqref{eq:rel subdiff2})
and $\|\cexp-\Id\|_{C^{0}}\le \delta$ (by \eqref{eq:hp-sottodiff}), 
we easily deduce that
\[
\pac u(E_{h-\sqrt\e})\subset \mathcal N_{K'(\delta+\sqrt{h\e})}\bigl(\partial^- v (E_{h})\bigr),
\]
proving \eqref{eq:sottodifftes1}.
\end{proof}

\begin{theorem}\label{c1alpha}Let \(\C_{1}\) and \(\C_{2}\) be two closed sets satisfying
\begin{equation*}
B_{1/3}\subset \C_{1} ,\C_{2}\subset B_{3},
\end{equation*}
let \(f, g\) be 
 two densities supported in $\C_1$ and $\C_2$ respectively, and
let \(u: \C_{1} \to \R\) be a \(c\)-convex function such that $\p_cu(\C_1)\subset B_3$
and \((T_{u})_\sharp f =g\).
Assume also that \(c\in C^{2,\alpha}(B_{3}\times B_{3})\) 
for some $\alpha \in (0,1)$. Then, for every \(\beta\in (0,1)\) there  exist  constants \(\delta _{0},\eta_{0}>0\) such that the following holds: if 
 \begin{equation}\label{eq:fg}
 \|f-  \1_{\C_{1}} \|_{\infty}+ \| g- \1_{\C_{2}}\|_{\infty} \leq \delta_{0},
 \end{equation}
 \begin{equation}\label{eq:costconv2}
 \|c(x,y)+x\cdot y\|_{C^{2,\alpha}(B_{3}\times B_{3})}\le \delta_{0},
 \end{equation}
 and 
\begin{equation}\label{eq:parabola}
\left\|u-\frac{1}{2} |x|^{2}\right\|_{C^{0}(B_{3})}\le \eta_{0},
\end{equation} 
then \(u\in C^{1,\beta}(B_{1/7})\).
\end{theorem}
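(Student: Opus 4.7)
The plan is to run a Caffarelli-style ``improvement of flatness'' iteration. At each scale, one approximates $u$ by a convex Brenier potential $v$ for the linear cost via Lemma \ref{lem:compact}, applies the classical Monge--Amp\`ere regularity theory to extract a good quadratic polynomial, and then zooms in at a fixed geometric rate. Repeating with geometrically shrinking error produces a pointwise second-order expansion of $u$ at every point of $B_{1/7}$ with H\"older-$\beta$ modulus, which is exactly the claimed $C^{1,\beta}$ estimate.

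For the base step, I would choose $\delta_0$ so small that Lemma \ref{lem:compact} delivers a convex $v$ with $\|u-v\|_{C^0(B_{1/3})}\le \omega(\delta_0)$, where $\nabla v$ sends $\1_{\C_1}$ onto $\1_{\rho\C_2}$ with $\rho$ close to $1$. Combined with \eqref{eq:parabola} this yields $\|v-\tfrac12|x|^2\|_{C^0(B_{1/3})}\le \omega(\delta_0)+\eta_0$. Since $v$ is an Alexandrov solution of $\det D^2 v=1$ on a region $L^\infty$-close to $B_{1/4}$ whose image under $\nabla v$ is Hausdorff-close to the convex ball $B_{1/4}$, the Caffarelli/Figalli--Kim interior theory (as developed in \cite{F-partial,FK-partial}) provides $v\in C^{2,\alpha}(B_{1/4})$ with $A:=D^2v(\zero)$ close to $\Id$. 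Setting $P(x):=v(\zero)+\nabla v(\zero)\cdot x+\tfrac12 Ax\cdot x$, this gives $\|u-P\|_{L^\infty(B_r)}\le \omega(\delta_0)+C\,r^{2+\alpha}$ for all $r\le 1/4$.

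For the iterative step, I would make the linear change of coordinates $x=A^{-1/2}x'$ that turns $\tfrac12 Ax\cdot x$ into $\tfrac12|x'|^2$, absorb $\nabla v(\zero)$ by a translation of the dual variable, and then dilate by a small radius $r$ fixed once and for all. The rescaled function
\[
u_1(\tilde x):=\frac{1}{r^2}\Bigl(u(rA^{-1/2}\tilde x)-P(rA^{-1/2}\tilde x)\Bigr)
\]
is $c_1$-convex for the naturally rescaled cost $c_1$, and a direct computation---using Lemma \ref{sottodif} to carry the subdifferential information through the rescaling---shows that the hypotheses of the theorem again hold on $B_3$ for $u_1, c_1$, with new smallness constants $\delta_1,\eta_1\ll\delta_0,\eta_0$. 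The improvement comes from the factor $r^\alpha$ in the flatness estimate and from the fact that dilating a $C^2$-almost-linear cost and almost-constant densities makes the remainders smaller. Iterating yields paraboloids $P_k$ with $\|u-P_k\|_{L^\infty(B_{r^k})}\le C\,r^{k(2+\beta)}$ for any preassigned $\beta<1$, and the geometric summability of $\|A_k-A_{k+1}\|$ gives a limit matrix $A_\infty$ and vector $p_\infty$ with $u(x)=u(\zero)+p_\infty\cdot x+\tfrac12 A_\infty x\cdot x+O(|x|^{2+\beta})$. Translation invariance of the hypotheses allows the same argument to run at any base point $x_0\in B_{1/7}$ with uniform constants, producing $u\in C^{1,\beta}(B_{1/7})$.

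The hardest part will be the bookkeeping in the inductive step. Three items must be monitored simultaneously at every iteration: (i) the rescaled domains continue to satisfy $B_{1/3}\subset \C_i^{(k)}\subset B_3$, which requires quantitative control of $\nabla v$ on small balls to know that the push-forward supports do not degenerate; (ii) the \emph{qualitative} modulus $\omega(\delta_0)$ from Lemma \ref{lem:compact} must dominate the unfavourable $r^{-2}$ factor introduced by the dilation, forcing $\delta_0\ll \eta_0\ll r$ to be chosen in a definite order; (iii) the rescaled cost $c_1$ must remain $C^2$-close to the linear cost on $B_3\times B_3$, which is automatic at leading order but requires absorbing the discrepancy $A-\Id$ into the coordinate change and exploiting the \emph{full} $C^2$ smallness in \eqref{eq:costconv2}. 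Arranging these three requirements so that the induction closes is precisely what dictates how small $\delta_0$ and $\eta_0$ must be taken in the statement.
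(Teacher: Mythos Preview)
Your strategy—approximate $u$ by a Brenier potential via Lemma~\ref{lem:compact}, extract a quadratic from the interior Monge--Amp\`ere theory, rescale anisotropically, and iterate—is precisely the paper's. The gap is in how you set up the rescaled function.

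With $u_1(\tilde x):=r^{-2}\bigl(u(rA^{-1/2}\tilde x)-P(rA^{-1/2}\tilde x)\bigr)$ and $P$ the \emph{full} second-order Taylor polynomial of $v$, the function $u_1$ is close to $0$, not to $\tfrac12|\tilde x|^2$; hypothesis \eqref{eq:parabola} therefore fails for $u_1$ and the induction, as you have written it, does not close. Worse, subtracting the quadratic part of $P$ destroys the cost structure you need: $u-\tfrac12 Ax\cdot x$ is $\hat c$-convex only for $\hat c(x,y)=c(x,y)+\tfrac12 Ax\cdot x$, whose $D_{xx}$ part is of order $1$, so \eqref{eq:costconv2} is violated for $\hat c$ regardless of how small $\delta_0$ was. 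Dilation does not help here, since pure second derivatives are scale-invariant.

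The correct move (the paper's Steps~4--5) is to subtract only the $c$-support $C_{x_0,y_0}(\cdot)=-c(\cdot,y_0)+c(x_0,y_0)+u(x_0)$, which plays the role of the \emph{affine} part, and to absorb the quadratic correction entirely into the anisotropic dilation $\tilde x=h_0^{-1/2}A^{-1}(x-x_0)$, $\tilde y=h_0^{-1/2}A\,M^{-1}(y-y_0)$ with $A=[D^2v(x_0)]^{-1/2}$ and $M=-D_{xy}c(x_0,y_0)$. Then $u_1$ is genuinely $c_1$-convex for the rescaled cost and is close to $\tfrac12|\tilde x|^2$, while $c_1$ remains $C^2$-close to $-\tilde x\cdot\tilde y$ with the \emph{same} constant $\delta_0$; the induction closes with fixed thresholds rather than shrinking ones. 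One further point you should not leave implicit: Lemma~\ref{sottodif} bounds $\partial_{c}u$ only from above, giving $\C_2^{(1)}\subset B_3$; the lower inclusion $B_{1/3}\subset \C_2^{(1)}$ requires the dual argument via $u^c$ and $v^*$ carried out in the paper's Step~3.
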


\begin{proof}We divide the proof into several steps.

\medskip
\noindent 
$\bullet$ \textit{Step 1: \(u\) is close to a strictly convex solution of the Monge Amp\`ere equation.}
Let \(v:\R^n \to \R\) be a convex function such that \(\nabla v_{\sharp} \1_{\C_{1}}=\1_{\rho\C_{2}}\) with $\rho=(|\C_1|/|\C_2|)^{1/n}$ (see Theorem \ref{thm:Brenier}).
Up to adding a constant to $v$, without loss of generality we can assume that $v(\zero)=u(\zero)$.
Hence, we can apply Lemma \ref{lem:compact} to
obtain 
\begin{equation}\label{eq:vicineeps}
\|v-u\|_{C^{0}(B_{1/3})}\le \omega(\delta_0)
\end{equation}
for some (universal) modulus of continuity $\omega:\R^+\to\R^+$, which combined with 
\eqref{eq:parabola} gives
$$
\left\|v-\frac{1}{2} |x|^{2}\right\|_{C^{0}(B_{1/3})}\le \eta_{0}+\omega(\delta_{0}).
$$
Also, since $\int_{\mathcal C_1}  f=\int_{\mathcal C_2} g$,
it follows easily from \eqref{eq:fg} that $|\rho - 1|\leq 3\delta_0$.
By these two facts we get that $\partial^-v(B_{1/4})\subset B_{7/24}\subset  \rho \mathcal C_2$
provided \(\delta_{0}\) and \(\eta_{0}\) are small enough (recall that  $v$ is convex and that
$B_{1/3}\subset \mathcal C_2$), so we can apply
\cite[Proposition 3.4]{FK-partial} to deduce that 
\(v\) is a strictly convex  Alexandrov solution to the Monge-Amp\`ere equation
\begin{equation}
\label{eq:MAv}
\det D^{2}v =1\quad \text{in \(B_{1/4}\).}
\end{equation}
In addition, by a simple compactness argument, we see that the modulus of strict convexity of \(v\) inside $B_{1/4}$ is universal.
So, by classical Pogorelov and Schauder estimates,  we obtain the existence of a universal constant \(K_0\geq 1\) such that
\begin{equation}\label{eq:pog}
\|v\|_{C^{3}(B_{1/5})}\le K_0, \qquad  \Id/K_0 \le D^{2}v\le K_0\Id \quad \text{in }B_{1/5}.
\end{equation}
In particular, there exists a universal value \(\bar h>0\) such that, for all \(x \in B_{1/7}\),
\[
Q(x, v,h):=\big\{z:\  v(z)\le v(x)+\nabla v (x)\cdot (z-x)+h\big\} \subset \subset  B_{1/6} \quad \forall\, h\le \bar h.
\]
\medskip
\noindent
$\bullet$ \textit{Step 2: Sections of \(u\) are close to  sections of \(v\).}
Given $x \in B_{1/7}$ and $y \in \p_cu(x)$, we define
$$
S(x,y,u,h):=\big\{z:\  u(z)\le u(x)-c(z,y)+c(x,y)+h\big\}.
$$
We  claim that, if $\delta_0$ is small enough, then for all \(x\in B_{1/7}\), $y \in \p_cu(x)$, and $h \leq \bar h/2$, it holds
\begin{equation}\label{eq:seccomparison}
Q(x,v,h-K_1\sqrt {\omega(\delta_0)})\subset S(x,y,u,h)\subset Q(x,v,h+K_1\sqrt {\omega(\delta_0)}),
\end{equation}
where \(K_1>0\) is a universal constant.

Let us show the first inclusion. For this, take $x \in B_{1/7}$, \(y\in \pac u (x)\), and define
\[
p_{x}:=-D_xc(x,y)\in  \partial ^{-} u(x).
\]   
Since \(v\) has universal \(C^{2}\) bounds (see \eqref{eq:pog}) and \(u\) is semi-convex (with a universal bound),
a simple interpolation argument gives
\begin{equation}\label{gradientivicini}
|p_{x}-\nabla v(x)|\le K'\sqrt{ \|u-v\|_{C^{0}(B_{1/5})}} \le  K'\sqrt{\om(\delta_0)} \quad \forall \, x\in B_{1/7}.
\end{equation}
In addition, by  \eqref{eq:costconv2},
\begin{equation}
\label{eq:yp}
|y-p_x|\leq  \|D_xc +{\rm Id}\|_{C^{0}(B_{3}\times B_3)} \leq \delta_0,
\end{equation}
hence
\begin{equation}
\label{eq:grad vic}
\begin{split}
|z\cdot p_{x}+c(z,y)|&\le |z\cdot p_{x}-z\cdot y|+|z\cdot y+c(z,y)|\le 2\delta_0 \qquad \forall\, x, z \in B_{1/7}.
\end{split}
\end{equation}
Thus, if $z \in Q(x,v,h-K_1\sqrt {\omega(\delta_0)})$, by \eqref{eq:vicineeps}, \eqref{gradientivicini}, and \eqref{eq:grad vic} we get
\[
\begin{split}
u(z)&\le v(z)+\omega(\delta_0) \le v(x)+\nabla v (x)\cdot (z-x)+h-K_1\sqrt{\om(\delta_0)}+\omega(\delta_0)\\
&\le u(x)+p_{x}\cdot z -p_{x}\cdot x+h-K_1\sqrt{\om(\delta_0)}+2\om(\delta_0)+2K'\sqrt{ \om(\delta_0)}\\
&\le u(x)-c(z,y)+c(x,y)+h-K_1\sqrt{\om(\delta_0)}+2\om(\delta_0)+2K'\sqrt{ \om(\delta_0)}+4\delta_0\\
&\le u(x)-c(z,y)+c(x,y)+h,
\end{split}
\] 
provided $K_1>0$ is sufficiently large.
This proves the first inclusion, and the second is analogous.

\medskip
\noindent
$\bullet$ {\em Step 3: Both the sections of $u$ and their images are close to ellipsoids with controlled eccentricity,
and $u$ is close to a smooth function near $x_0$.}
We claim that there exists a universal constant \(K_2\geq 1\) such that the following holds: 
For every \(\eta_{0}>0\) small, there exist small positive constants
$h_0=h_{0}(\eta_{0})$ and $\delta_{0}=\delta_{0}(h_{0},\eta_{0})$ such that, for all \(x_{0}\in B_{1/7}\), there is a symmetric matrix \(A\) satisfying 
\begin{equation}\label{uniformA}
\Id/K_2  \le A \le K_2\Id,\qquad \det(A)=1, 
\end{equation}
and such that, for all \( y_{0}\in \pac u(x_{0}) \),
\begin{equation}\label{keyeq}
\begin{split}
&A\left(B_{\sqrt {h_{0}/8}}(x_{0})\right) \subset S(x_{0},y_0,u,h_{0})
\subset A\left(B_{\sqrt {8h_{0}}}(x_{0})\right),\\
& A^{-1}\left(B_{\sqrt {h_{0}/8}}(y_{0})\right) \subset \pac u ( S(x_{0},y_0,u,h_{0}))\subset A^{-1}\left(B_{\sqrt {8h_{0}}}(y_{0})\right).
\end{split}
\end{equation}
Moreover
\begin{equation}\label{eq:parabola2}
\left\|u-C_{x_{0},y_{0}}-\frac{1}{2} \big |A ^{-1}(x-x_{0})\big |^{2}\right\|_{C^{0}\left(A\left(B_{\sqrt {8h_{0}}}(x_{0})\right)\right)}\le \eta_{0}h_{0},
\end{equation} 
where \(C_{x_{0}y_{0}}\)
 is a \(c\)-support function for $u$ at \(x_{0}\), see \eqref{eqCxy0}.

In order to prove the claim,  take \(h_{0}\ll \bar h\) small (to be fixed)  and  \(\delta_{0}\ll h_{0} \)  such that \(K_1\sqrt{\om(\delta_{0})}\le h_{0}/2\),
where $K_1$ is as in Step 2, so that
\begin{equation}\label{eq:seccomparison2}
Q(x_{0},v,h_{0}/2)\subset S(x_{0},y_0,u,h_{0})\subset Q(x_{0},v,3h_{0}/2)\subset \subset B_{1/6} .
\end{equation}
By \eqref{eq:pog} and Taylor formula we get
\begin{equation}\label{taylor}
v(x)=v(x_{0})+\nabla v(x_{0})\cdot (x-x_{0})+\frac 1 2  D^{2}v(x_{0})(x-x_{0}) \cdot (x-x_{0})+O(|x-x_{0}|^{3}),
\end{equation}
so that defining 
\begin{equation}\label{ellipse}
E(x_{0},h_{0}):=\Bigg\{x:\   \frac 1 2 D^{2} v(x_{0})(x-x_{0})\cdot (x-x_{0})\le h_{0} \Bigg\}
\end{equation} 
and using \eqref{eq:pog}, we deduce that, for every \(h_{0}\) universally small,
\begin{equation}\label{sezionivpalle}
E(x_{0},h_{0}/2)\subset Q(x_{0},v,h_{0})\subset E(x_{0}, 2 h_{0}).
\end{equation}
Moreover, always for \(h_{0}\) small, thanks to  \eqref{taylor} and the uniform convexity of \(v\)
\begin{equation}\label{ellisseduale}
\nabla v \big(E(x_{0},h_{0})\big)\subset E^{*}(\nabla v(x_{0}),2h_{0}) \subset \nabla v \big(E(x_{0},3 h_{0}))
\end{equation}
where we have set
\[
 E^{*}(\bar y,h_{0}):= \Bigg\{y:\   \frac 1 2 \big [D^{2} v(\bar y)\big]^{-1}(y-\bar y)\cdot (y-\bar y ) \le  h_{0} \Bigg\}.
\]
By Lemma \ref{sottodif}, \eqref{sezionivpalle}, and \eqref{ellisseduale} applied with $3h_0$ in place of $h_0$, we deduce that for \(\delta_{0}\ll h_{0}\ll \bar h\)
\begin{equation}\label{eq:inclusion}
\pac u\big (S(x_{0},y_0,u,h_{0})\big ) \subset \mathcal{N}_{K''\sqrt{\om(\delta_0)}}\big(\nabla v (E(x_{0},3 h_{0}))\big)\subset E^{*}(\nabla v(x_{0}),7h_{0}).
\end{equation}
Moreover, by  \eqref{gradientivicini}, if $y_0 \in \pac u(x_0)$ and we set $p_{x_0}:=-D_xc(x_0,y_0)$, then
\begin{equation*}
|y_{0}-\nabla v (x_{0})|\le |p_{x_{0}}-\nabla v (x_{0})|+\|D_xc +{\rm Id}\|_{C^{0}(B_{3}\times B_3)}\le K' \sqrt {\omega (\delta_{0}})+\delta_{0}.
\end{equation*}
Thus, choosing \(\delta_{0}\) sufficiently small, it holds
\begin{equation}\label{1}
E^{*}(\nabla v(x_{0}),7h_{0})\subset E^{*}(y_{0},8h_{0}) \quad \forall \,y_{0}\in \pac u(x_{0}).
\end{equation}
We now want to show  that
\begin{equation*}
E^{*}(y_{0},h_{0}/8)\subset \pac u\big (S(x_{0},y_0,u,h_{0})\big )\quad \forall \,y_{0}\in \pac u(x_{0}).
\end{equation*}
Observe that, arguing as above, we get
\begin{equation}\label{2}
E^{*}(y_{0},h_{0}/8)\subset E^{*}(\nabla v(x_{0}),h_{0}/7)\quad \forall \,y_{0}\in \pac u(x_{0})
\end{equation}
provided \(\delta_{0}\) is small enough, so  it is enough to prove that 
\begin{equation*}
E^{*}(\nabla v(x_{0}),h_{0}/7)\subset \pac u\big (S(x_{0},y_0,u,h_{0})\big ).
\end{equation*}
For this, let us define the $c^*$-convex function $u^c:B_3\to \R$ and the convex function $v^*:B_3\to \R$ as
\begin{equation*}
u^{c}(y):=\sup_{x\in B_{1/5}}\big\{-c(x,y)-u(x)\big\},\qquad v^*(y):=\sup_{x\in B_{1/5}}\big\{x\cdot y-v(x)\big\}
\end{equation*}
(see \eqref{eq:dual}). Then it is immediate to check that
\begin{equation}\label{convicine}
|u^{c}-v^{*}|\le \om(\delta_0)+\delta_0 \le 2\om(\delta_0)  \qquad \text{on $B_3$. }
\end{equation}
Also, in view of \eqref{eq:pog}, \(v^{*}\)
is a uniformly convex function of class \(C^{3}\) on the open set \( \nabla v(B_{1/5})\).    
In addition, since 
\begin{equation}\label{ffameno1}
F\subset \pac u (\pacs u^{c}(F))\qquad \text{for any set $F$},
\end{equation}
thanks to \eqref{eq:seccomparison2} and  \eqref{sezionivpalle}  it is enough to show 
\begin{equation}\label{vstar}
\pacs u^{c }(E^{*}(\nabla v(x_{0}),h_{0}/7)) \subset E(x_{0},h_{0}/4).
\end{equation}
For this, we apply Lemma \ref{sottodif} to \(u^{c}\) and \(v^{*}\) to infer
\[
\pacs u^{c }(E^{*}(\nabla v(x_{0}),h_{0}/7))\subset \mathcal N_{K'''\sqrt{\om(\delta )}} \big(\nabla v^{*}(E^{*}(\nabla v(x_{0}),h_{0}/7))\big)\subset  E(x_{0},h_{0}/4),
\]
where we used that 
\[
\nabla v^{*}=[\nabla v]^{-1}\quad \text{and} \quad  D^{2}v^{*}(\nabla v(x_{0}))=[D^{2}v(x_{0})]^{-1}.
\]
Thus, recalling \eqref{eq:inclusion}, we have proved that there exist \(h_{0}\) universally small, and \(\delta_{0}\) small depending on \(h_{0}\), such that 
\begin{equation}\label{codc}
E^{*}(\nabla v (x_{0}),h_{0}/7)\subset \pac u ( S(x_{0},y_0,u,h_{0}))\subset E^{*}(\nabla v (x_{0}), 7 h_{0})
\qquad \forall \,x_{0}\in B_{1/7}.
\end{equation}
Using \eqref{eq:seccomparison2},  \eqref{sezionivpalle},
\eqref{1}, and \eqref{2}, this proves \eqref{keyeq} with \(A:=[D^{2}v(x_{0})]^{-1/2}\).
Also, thanks to \eqref{eq:MAv} and \eqref{eq:pog}, \eqref{uniformA} holds.

In order to prove the second part of the claim, we exploit \eqref{eq:vicineeps}, \eqref{eq:costconv2}, \eqref{eq:yp}, \eqref{gradientivicini},
\eqref{taylor}, and \eqref{uniformA} 
(recall that $C_{x_0,y_0}$ is defined in \eqref{eqCxy0} and that $A=[D^{2}v(x_{0})]^{-1/2}$):
\[
\begin{split}
&\left\|u-C_{x_{0},y_{0}} -\frac{1}{2} \big|A^{-1} (x-x_{0})\big |^{2}\right\|_{C^{0}(E(x_{0},8h_{0}))}\\
&=\left\|u-C_{x_{0},y_{0}}-\frac{1}{2}  D^{2}v(x_{0})(x-x_{0})\cdot(x-x_{0})\right\|_{C^{0}(E(x_{0},8h_{0}))}\\
&\le 2 \|u-v\|_{C^{0}(E(x_{0},8h_{0}))}+\left\|c(x,y_{0})+x\cdot y_0\right\|_{C^{0}(E(x_{0},8h_{0}))}+\left\|c(x_{0},y_{0})+x_{0}\cdot y_0\right \|_{C^{0}(E(x_{0},8h_{0}))}\\
&\quad+\left\|\bigl(y_0-p_{x_0}\bigr)\cdot (x-x_{0})\right \|_{C^{0}(E(x_{0},8h_{0}))}+\left\|\bigl(p_{x_0}-\nabla v (x_{0})\bigr)\cdot (x-x_{0})\right \|_{C^{0}(E(x_{0},8h_{0}))}\\
&\quad +\left\|v-v(x_{0})-\nabla v (x_{0})\cdot (x-x_{0})-\frac{1}{2}  D^{2}v(x_{0})(x-x_{0})\cdot(x-x_{0})\right\|_{C^{0}(E(x_{0},8h_{0}))}\\
&\le 2\omega(\delta_{0})+3\delta_0 + K'\sqrt{\omega(\delta_{0})}+K\left(K_2\sqrt{8h_{0}}\right)^{3}\le \eta_{0}h_{0},
\end{split}
\]
where the last inequality follows by choosing first \(h_{0}\) sufficiently small,  and then \(\delta_{0}\) much smaller than $h_0$.

\medskip
\noindent
$\bullet$ \textit{Step 4: A first change of variables.} Fix \(x_{0} \in B_{1/7}\), \( y_{0}\in \pac u (x_{0})\), define
$M:=-D_{xy}c(x_{0},y_{0})$, and consider the change of variables
\[
\begin{cases}
\bar x :=x-x_{0}\\
\bar y:= M^{-1} (y-y_{0}).
\end{cases}
\]
Notice that, by \eqref{eq:costconv2}, it follows that 
\begin{equation}\label{hessc}
|M-\Id|+|M^{-1}-\Id|\le 3 \delta_0
\end{equation}
for $\delta_0$ sufficiently small.
We also define
\[
\begin{split}
\bar c(\bar x,\bar y)&:=c(x,y)-c(x,y_{0})-c(x_{0},y)+c(x_{0},y_{0}),\\
\bar u(\bar x)&:=u(x)-u(x_{0})+c(x,y_{0})-c(x_{0},y_{0}),\\
\bar u^{\bar c}(\bar y)&:=u^{c }(y)-u^c(y_{0})+c(x_{0}, y)-c(x_{0},y_{0}).
\end{split}
\]
Then \(\bar u\) is \(\bar c\)-convex, $\bar u^{\bar c}$ is $\bar c^*$-convex (where $\bar c^*(\bar y,\bar x)=\bar c(\bar x,\bar y)$), and
\begin{equation}\label{barc}
\bar c(\cdot,\zero)=\bar c(\zero,\cdot)\equiv 0,\qquad D_{\bar x\bar y}\bar c(\zero,\zero)=-\Id.
\end{equation}
We also notice that
\begin{equation}\label{newcsottodif}
\partial _{\bar c }\bar u(\bar x)=M^{-1} \bigl(\pac u(\bar x +x_{0})-y_{0}\bigr).
\end{equation}
Thus, recalling \eqref{keyeq}, and using \eqref{hessc} and \eqref{newcsottodif}, for \(\delta_{0}\) sufficiently small  we obtain
\begin{equation}\label{keyeq2}
\begin{split}
&A\left(B_{\sqrt {h_{0}/9}}\right)  \subset
S(\zero,\zero,\bar u,h_{0})\subset A\left(B_{\sqrt {9h_{0}}}\right),\\
&A^{-1}\left(B_{\sqrt {h_{0}/9}}\right)\subset M^{-1}A^{-1}\left(B_{\sqrt {h_{0}/8}}\right) \subset \partial _{\bar c }\bar u( S(\zero,\zero,\bar u,h_{0})) \subset M^{-1}A^{-1}\left(B_{\sqrt {8h_{0}}}\right)\subset A^{-1}\left(B_{\sqrt {9h_{0}}}\right).
\end{split}
\end{equation}
Since $(T_u)_\sharp f=g$, it follows  that $T_{\bar u}=\bar{\textrm{c}}\textrm{-exp}(\nabla \bar u)$ satisfies
$$
(T_{\bar u})_\sharp \bar f=\bar g, \qquad \text{with}\quad \bar f(\bar x):=f(\bar x+x_0), \quad \bar g(\bar y):=\det(M)\,g(M\bar y+y_0)
$$
(see for instance the footnote in the proof of Theorem \ref{thm:c}).
Notice that, since $|M-\Id|\leq \delta_0$ (by \eqref{eq:costconv2}),  we have $|\det(M)-1| \leq (1+2n)\delta_0$ (for $\delta_0$ small),
so by \eqref{eq:fg} we get
\begin{equation}
\label{eq:barfg}
\|\bar f -\1_{\C_1-x_0}\|_{\infty}+ \|\bar g -\1_{M^{-1}(\C_2-y_0)}\|_{\infty} \leq 2(1+n)\delta_0.
\end{equation}

\medskip
\noindent
$\bullet$ \textit{Step 5: A second change of variables and the iteration argument.} 
We now perform a second change of variable: we set
\begin{equation}
\label{eq:rescaling}
 \begin{cases}
\tilde x:=\frac{1}{\sqrt{h_{0}}} A^{-1}\bar x\\
\tilde y:=\frac{1}{\sqrt{h_{0}}} A\bar y,
\end{cases}
\end{equation}
and define 
\[
c_{1}(\tilde x, \tilde y):=\frac {1}{h_{0}} \bar c\big (\sqrt{h_{0}} A\tilde x,\sqrt{h_{0}} A^{-1}\tilde y\big),
\]
\[
\begin{split}
u_{1}(\tilde x)&:=\frac {1} {h_{0}}\bar u (\sqrt{h_{0}} A \tilde x),\\
u^{c_{1}}_{1}(\tilde y)&:=\frac {1} {h_{0}}\bar u^{\bar c} (\sqrt{h_{0}} A^{-1} \tilde y).
\end{split}
\]
We also define
\[
f_{1}(\tilde x):=\bar f(\sqrt{h_{0}} A \tilde x),\qquad g_{1}(\tilde y):=\bar g(\sqrt{h_{0}}A^{-1} \tilde y). 
\]
Since $\det(A)=1$ (see \eqref{uniformA}), it is easy to check that $(T_{u_{1}})_\sharp f_1=g_1$ (see the footnote in the proof of Theorem \ref{thm:c}).
Also, since \(\left(\|A\|+\|A^{-1}\|\right)\sqrt{h_{0}}\ll 1\), it follows from \eqref{eq:barfg} that
\begin{equation}
\label{eq:fg1}
|f_1-1|+|g_1-1| \leq 2(1+n)\delta_0 \qquad \text{inside $B_3$}.
\end{equation}
Moreover, defining 
\[
\mathcal C^{(1)}_{1}:=  S(\zero,\zero,u_{1},1), \qquad
\mathcal C^{(1)}_{2}:= \partial _{c_{1}} u_{1}( S(\zero,\zero,u_{1},1)),
\]
both $\mathcal C_1^{(1)}$ and \(\mathcal C^{(1)}_{2}\) are closed, and thanks to \eqref{keyeq2}
\begin{equation}\label{keyeq3}
B_{1/3} \subset \mathcal C^{(1)}_{1},\mathcal C^{(1)}_{2}\subset B_{3}.
\end{equation}
Also, since  $(T_{u_{1}})_\sharp f_1=g_1$, arguing as in the proof of Theorem \ref{thm:c} 
we get
\[
(T_{u_{1}})_{\sharp}\big(f_{1}\1_{\mathcal C^{(1)}_{1}}\big)=\big(g_{1}\1_{\mathcal C^{(1)}_{2}}\big),
\]
and by \eqref{eq:fg1}
$$
\|f_1\1_{\mathcal C^{(1)}_{1}}-\1_{\mathcal C^{(1)}_{1}}\|_{\infty}+\|g_1\1_{\mathcal C^{(1)}_{2}}-\1_{\mathcal C^{(1)}_{2}}\|_{\infty} \leq 2(1+n)\delta_0.
$$
Finally, using \eqref{barc}, \eqref{eq:costconv2}, and \eqref{eq:parabola2}, it is  easy to check that 
$$
\|c_{1}(\tilde x, \tilde y)+\tilde x\cdot \tilde y\|_{C^{2,\alpha}(B_{3} \times B_{3})}\le \delta_{0},
\qquad
\left\|u_{1}-\frac {1}{2}|\tilde x|^{2}\right\|_{C^{0}(B_{3})}\le \eta_{0}
$$ provided $h_0$ is small enough. Indeed, the second inequality is just a direct consequence of \eqref{eq:parabola2}, while  \eqref{barc} and a Taylor expansion yield
\begin{equation*}\label{e:taylorholder}
c_{1}(\tilde x, \tilde y)=\frac {1}{h_{0}} \bar c\big (\sqrt{h_{0}} A\tilde x,\sqrt{h_{0}} A^{-1}\tilde y\big)=\tilde x\cdot \tilde y +O(\delta_0 K_2^2 (K_2\sqrt{h_0})^{\alpha}).
\end{equation*}
so we can choose \(h_0\) small enough to ensure that
\begin{equation}\label{e:h0}
O(\delta_0 K_2^2 (K_2\sqrt{h_0})^{\alpha})\le \delta_0.
\end{equation}
This shows that $u_{1}$ satisfies the same assumptions as $u$ with $\delta_0$ replaced by $2(1+n)\delta_0$.
Hence, up to take $\delta_0$ slightly smaller,
we can apply Step 3 to $u_{1}$, and we find a symmetric  matrix \(A_{1}\) satisfying 
\begin{equation*}
\Id/K_2  \le A_{1} \le K_2\Id, \qquad \det(A_1)=1,
\end{equation*}
\begin{equation*}
\begin{split}
&A_1\left(B_{\sqrt {h_{0}/8}}\right) \subset S(\zero,\zero,u_{1},h_{0})\subset A_1\left(B_{\sqrt {8h_{0}}}\right),\\
& A_{1}^{-1}\left(B_{\sqrt {h_{0}/8}}\right) \subset \partial_{c_{1}} u_{1} ( S(\zero,\zero,u_{1},h_{0})) \subset A_{1}^{-1}\left(B_{\sqrt {8h_{0}}}\right),
\end{split}
\end{equation*}
\begin{equation*}
\left\|u_{1}-\frac{1}{2} \big |A_{1} ^{-1}\tilde x\big |^{2}\right\|_{C^{0}(A_{1}(B(0,\sqrt {8h_{0}}))}\le \eta_{0}h_{0}.
\end{equation*} 
(Here \(K_{2}\) and \(h_{0}\) are as in Step 3.)

This allows us to apply to $u_1$ the very same construction
as the one  used above
to define $u_1$ from $\bar u$: we set
\[
\begin{split}
c_{2}(\tilde x, \tilde y):=\frac {1}{h_{0}} c_{1}\big (\sqrt{h_0} A_{1}\tilde x,\sqrt{h_{0}} A_{1}^{-1}\tilde y\big),\qquad
u_{2}(\tilde x)&:=\frac {1} {h_{0}}u_{1} (\sqrt{h_{0}} A_{1} \tilde x),\\
\end{split}
\]
so that $(T_{u_2})_\sharp f_2=g_2$ with
$$
f_{2}(\tilde x):=f_1(\sqrt{h_{0}} A_1 \tilde x),\qquad g_{2}(\tilde y):=\bar g(\sqrt{h_{0}}A_1^{-1} \tilde y).
$$
Arguing as before, it is easy to check that $u_2,c_2,f_2,g_2$ satisfy the same assumptions as
$u_1,c_1,f_1,g_1$ with exactly the same constants, provided \(h_0\ll1\) satisfies \eqref{e:h0}.

So we can keep iterating this construction, defining for any
\(k\in \N\) 
\[
\begin{split}
c_{k+1}(\tilde x, \tilde y):=\frac {1}{h_{0}} c_{k}\big (\sqrt{h_0} A_{k}\tilde x,\sqrt{h_{0}} A_{k}^{-1}\tilde y\big),\qquad
u_{k+1}(\tilde x)&:=\frac {1} {h_{0}}u_{k} (\sqrt{h_{0}} A_{k} \tilde x),\\
\end{split}
\]
where $A_k$ is the matrix constructed in the $k$-th iteration.
In this way, if we set
$$
M_k:=A_k \cdot \ldots \cdot A_1, \qquad \forall\, k\geq 1,
$$
we obtain a sequence of symmetric matrices satisfying
\begin{equation}\label{akappa}
\Id/K^{k}_2  \le M_{k} \le K^{k}_2\Id, \qquad \det(M_k)=1,
\end{equation}
and such that
\begin{equation}\label{eq:parabola3}
M_{k}\left(B_{(h_0/8)^{k/2}}\right) \subset S(\zero,\zero,u_{k},h^{k}_{0})\subset M_{k}\left(B_{(8h_0)^{k/2}}\right).
\end{equation}

\medskip
\noindent
$\bullet$ \textit{Step 6: \(C^{1,\beta}\) regularity}. We now show that, for any $\beta \in (0,1)$,
we can choose $h_0$ and $\delta_0=\delta_0(h_0)$ small enough so that $u_1$ is \(C^{1,\beta}\) at the origin (here $u_1$ is the function constructed in the previous step).
This will imply that $u$ is $C^{1,\beta}$ at $x_0$ with universal bounds, which by the arbitrariness of $x_0 \in B_{1/7}$
gives \(u\in C^{1,\beta}(B_{1/7})\).

Fix \(\beta \in (0,1)\). Then
by \eqref{akappa} and  \eqref{eq:parabola3}
we get
\begin{equation}\label{sezionifighe}
B_{\bigl(\sqrt{h_0}/(\sqrt{8}K_2)\bigr)^{k}}\subset S(\zero,\zero,u_{1},h^{k}_{0})\subset B_{\bigl(K_2\sqrt{8h_0}\bigr)^{k}},
\end{equation}
so defining $r_0:=\sqrt{h_0}/(\sqrt{8}K_2)$ we  obtain
\[
\|u_{1}\|_{C^{0}(B_{r^{k}_{0}})}\le h_0^k =\left(\sqrt{8}K_2r_0\right)^{2k} \leq r_0^{(1+\beta)k},
\]
provided \(h_{0}\) (and so $r_0$) is sufficiently small.
This implies the \(C^{1,\beta}\) regularity of \(u_{1}\) at $\zero$, concluding the proof.
\end{proof}

\begin{remark}[Local to global principle]\label{localtoglobal}
If \(u\) is differentiable at \(x\) and \(c\) satisfies {\bf (C0)}-{\bf (C1)}, then every ``local support'' at \(x\)  is also a ``global $c$-support'' at \(x\), that is,
$\p_cu(x)=\cexp_x(\p^-u(x))$.
To see this, just notice that
\[
\emptyset \ne \pac u (x)\subset \cexp_x(\p^{-} u (x))=\{\cexp_x(\nabla u (x))\}
\]
(recall \eqref{eq:rel subdiff2}), so necessarily the two sets have to coincide.
\end{remark}

\begin{corollary}\label{cconv} Let \(u\) be as in Theorem \ref{c1alpha}. Then \(u\) is strictly \(c\)-convex in \(B_{1/7}\).
More precisely, for every \(\gamma>2\) there exist \(\eta_{0},\delta_{0}>0\) depending only on \(\gamma\) such that, if the hypotheses of Theorem \ref{c1alpha} are satisfied,
then, for all \(x_{0}\in B_{1/7}\), $y_0 \in \p_cu(x_0)$, and \(C_{x_{0},y_{0}}\) as in \eqref{eqCxy0}, we have
\begin{equation}\label{strict}
\inf_{\partial B_r(x_0)}\big\{ u-C_{x_{0},y_{0}}\big\}\ge c_0r^{\gamma}\qquad \forall \,r\le \dist(x_{0},\partial B_{1/7}),
\end{equation}
with $c_0>0$ universal.
\end{corollary}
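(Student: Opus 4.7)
The plan is to revisit the iterative construction in the proof of Theorem~\ref{c1alpha} and extract a quantitative \emph{lower} bound on $u-C_{x_{0},y_{0}}$ from the outer inclusion in \eqref{eq:parabola3}, which so far has been exploited only to obtain the \emph{upper} bound yielding $C^{1,\beta}$ regularity. Fix $\gamma>2$, $x_{0}\in B_{1/7}$ and $y_{0}\in \p_{c}u(x_{0})$, and set
\[
\bar u(\bar x):=u(x_{0}+\bar x)-C_{x_{0},y_{0}}(x_{0}+\bar x),
\]
which is nonnegative and vanishes at $\bar x=0$. I would first repeat the translation of Step~4 and the iteration of Step~5 of the proof of Theorem~\ref{c1alpha} at the point $x_{0}$, and then unwind the two changes of variables \eqref{eq:rescaling} once. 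A short direct computation, using \eqref{sezionifighe} and $\|A_{1}\|\le K_{2}$, shows that the outer inclusion in \eqref{eq:parabola3} translates, in the $\bar x$ variable, to
\[
\{\bar x:\bar u(\bar x)\le h_{0}^{k}\}\subset B_{C_{0}r_{1}^{k}},\qquad r_{1}:=K_{2}\sqrt{8h_{0}},
\]
for every $k\ge 1$, with $C_{0}=1/\sqrt 8$ (the precise value is immaterial, what matters is that it is universal).

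The crucial observation is that the effective exponent
\[
\alpha(h_{0}):=\frac{\log(1/h_{0})}{\log(1/r_{1})}=\frac{1}{\tfrac 12+\bigl(\log K_{2}+\tfrac 12\log 8\bigr)/\log h_{0}}
\]
tends to $2^{+}$ as $h_{0}\to 0^{+}$. Hence, given $\gamma>2$, the key preparatory step is to choose $h_{0}=h_{0}(\gamma)$ small enough so that $\alpha(h_{0})<\gamma$, and then to select $\delta_{0}=\delta_{0}(h_{0})$ and $\eta_{0}=\eta_{0}(h_{0})$ according to Step~3 of the proof of Theorem~\ref{c1alpha}, so that the whole iteration (and hence the above displayed containment) is applicable uniformly at every $x_{0}\in B_{1/7}$ and every $y_{0}\in\p_{c}u(x_{0})$.

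To conclude, I would fix $r\le \dist(x_{0},\partial B_{1/7})<1/7$ and any $\bar x$ with $|\bar x|=r$, and split into two cases. If $r\le C_{0}r_{1}$, let $k\ge 1$ be the smallest integer with $C_{0}r_{1}^{k}<r$; the contrapositive of the displayed inclusion gives $\bar u(\bar x)>h_{0}^{k}$, and the minimality of $k$ combined with $\alpha(h_{0})<\gamma$ and $r<1$ yields $h_{0}^{k}\ge c_{1}r^{\alpha(h_{0})}\ge c_{1}r^{\gamma}$ for some $c_{1}>0$ depending on $\gamma$ through $h_{0}$. In the complementary range $C_{0}r_{1}<r\le 1/7$, the $k=1$ case of the inclusion already forces $\bar u(\bar x)>h_{0}\ge h_{0}\cdot 7^{\gamma}r^{\gamma}$. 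Setting $c_{0}:=\min\{c_{1},\,h_{0}\cdot 7^{\gamma}\}$ and translating back to the original $x$-coordinates gives \eqref{strict}.

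The only nontrivial obstacle is the careful unravelling of the two nested changes of variables in Steps~4--5 to get the containment in $\bar x$-coordinates with a clean universal constant $C_{0}$; everything else reduces to the elementary calculus fact $\alpha(h_{0})\to 2^{+}$. This limit is also conceptually informative: it is exactly the reason why this iteration method cannot produce a strict $c$-convexity exponent $\gamma\le 2$, since pushing the scheme further would amount to $C^{1,1}$ (equivalently, quadratic) behaviour for $\bar u$, which is not available at this stage of the analysis.
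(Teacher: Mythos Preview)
Your proposal is correct and follows essentially the same route as the paper: both arguments use the outer inclusion in \eqref{sezionifighe}/\eqref{eq:parabola3} to deduce $\inf_{\partial B_{\varrho_0^k}}\bar u\ge h_0^k$ with $\varrho_0=K_2\sqrt{8h_0}$, and then observe that $h_0^k\ge \varrho_0^{\gamma k}$ once $h_0$ is small enough, which is exactly your statement that $\alpha(h_0)\to 2^+$. The paper simply works with $u_1$ in the rescaled variables and leaves the interpolation for intermediate radii implicit, whereas you unwind back to $\bar u$ and spell out the dyadic interpolation and the large-$r$ case; neither difference is substantive.
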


\begin{proof}With the same notation as in the proof of Theorem \ref{c1alpha}, it is enough to show that
\[
\inf_{\partial B_r}u_{1}\ge r^{1/\beta},
\]
where \(u_{1}\) is the function constructed in Step 5 of the proof of Theorem \ref{c1alpha}.
Defining \(\varrho_{0}:=K_2\sqrt{8h_0}\), it follows from \eqref{sezionifighe} that
\[
\inf_{\partial B_{\varrho^{k}_{0}}} u_{1} \ge h_{0}^{k}=\left(\varrho_{0}/(\sqrt{8}K_{2})\right)^{2k}\ge \varrho_{0}^{\gamma k},
\]
provided \(h_{0}\) is small enough.
\end{proof}
A simple consequence of the above results is the following:
\begin{corollary}\label{open}Let \(u\) be as in Theorem \ref{c1alpha}, then \(T_{u}(B_{1/7})\) is open.
\end{corollary}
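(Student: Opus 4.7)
The plan is to show that for any $x_0\in B_{1/7}$ and $y_0:=T_u(x_0)$, every $y$ sufficiently close to $y_0$ lies in $T_u(B_{1/7})$, by producing a preimage as an interior minimizer of $u+c(\cdot,y)$.

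First, since $u\in C^{1,\beta}(B_{1/7})$ by Theorem \ref{c1alpha}, Remark \ref{localtoglobal} gives that $\p_cu(x)=\{T_u(x)\}$ at every $x\in B_{1/7}$, so $y_0\in\p_cu(x_0)$. Let $C_{x_0,y_0}$ be the associated $c$-support from \eqref{eqCxy0}, and pick $r>0$ with $\overline{B_r(x_0)}\subset B_{1/7}$. Corollary \ref{cconv} then yields a quantitative lower bound
\[
u(z)-C_{x_0,y_0}(z)\ \ge\ c_0\,r^{\gamma}\qquad\forall\,z\in\partial B_r(x_0).
\]

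Next, for $y\in\R^n$ close to $y_0$, introduce the competing ``would-be $c$-support'' at $x_0$ pointing at $y$:
\[
\widetilde C_y(z):=-c(z,y)+c(x_0,y)+u(x_0),
\]
so that $\widetilde C_{y_0}=C_{x_0,y_0}$ and $u(x_0)-\widetilde C_y(x_0)=0$. Using the $C^2$ bound on $c$ from \eqref{eq:costconv2}, a first-order Taylor expansion in the $y$-variable gives
\[
\bigl|C_{x_0,y_0}(z)-\widetilde C_y(z)\bigr|\ =\ \bigl|\bigl(c(z,y)-c(x_0,y)\bigr)-\bigl(c(z,y_0)-c(x_0,y_0)\bigr)\bigr|\ \le\ K_3\,|z-x_0|\,|y-y_0|,
\]
with $K_3$ universal. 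Combining the two displays,
\[
u(z)-\widetilde C_y(z)\ \ge\ c_0\,r^{\gamma}-K_3\,r\,|y-y_0|\qquad\forall\,z\in\partial B_r(x_0).
\]

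Now fix $r$ small enough that $\overline{B_r(x_0)}\subset B_{1/7}$, and then require $|y-y_0|<\frac{c_0}{2K_3}\,r^{\gamma-1}$; this forces $u-\widetilde C_y\ge \tfrac{c_0}{2}r^{\gamma}>0$ on $\partial B_r(x_0)$, while $u-\widetilde C_y$ vanishes at the interior point $x_0$. Since $u\in C^0(\overline{B_r(x_0)})$ and $\widetilde C_y$ is smooth, $u-\widetilde C_y$ attains its minimum on $\overline{B_r(x_0)}$ at some interior point $\bar x\in B_r(x_0)$. At $\bar x$, differentiability of $u$ and $\widetilde C_y$ gives the first-order condition
\[
\nabla u(\bar x)\ =\ \nabla\widetilde C_y(\bar x)\ =\ -D_xc(\bar x,y),
\]
which by \eqref{eq:cexp} and \eqref{eq:Tu} means $y=\cexp_{\bar x}(\nabla u(\bar x))=T_u(\bar x)$. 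Since $\bar x\in B_r(x_0)\subset B_{1/7}$, we conclude $y\in T_u(B_{1/7})$, proving that $T_u(B_{1/7})$ is open.

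The only non-routine ingredient is quantifying how close $y$ must be to $y_0$ compared to the radius $r$; this is precisely where strict $c$-convexity from Corollary \ref{cconv} is essential, since the polynomial lower bound $c_0 r^\gamma$ with any $\gamma>2$ dominates the linear-in-$|y-y_0|$ perturbation of the support once $r$ and $|y-y_0|$ are chosen in the right order.
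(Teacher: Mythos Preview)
Your proof is correct and follows essentially the same approach as the paper: use the strict $c$-convexity from Corollary \ref{cconv} to force the minimum of $u+c(\cdot,y)$ over $\overline{B_r(x_0)}$ to be interior for $y$ near $y_0$, then read off $y=T_u(\bar x)$ from the first-order condition and Remark \ref{localtoglobal}. The only cosmetic difference is that you control the perturbation via the mixed second derivatives of $c$ (yielding the bound $K_3|z-x_0|\,|y-y_0|$), whereas the paper uses the cruder $C^1$ bound $\|c\|_{C^1}|y-y_0|$; both suffice.
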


\begin{proof}Since \(u\in C^{1,\beta}(B_{1/7})\) we have that \(T_{u}(B_{1/7})=\pac u(B_{1/7})\) (see
Remark \ref{localtoglobal}).
We claim that it is enough to show that if \(y_{0}\in \pac u(B_{1/7})\), then  there exists \(\e=\e(y_0)>0\) small such that, for all \(|y-y_{0}|<\e\),
the function \(u(\cdot)+c(\cdot,y)\) has a local minimum at some point \(\bar x\in B_{1/7}\). Indeed, if this is the case, then
\[
\nabla u (\bar x)=-D_{x}c(\bar x, y),
\]
and so \(y\in \partial_{c} u(\bar x)\)  (by Remark \ref{localtoglobal}), hence $B_\e(y_0)\subset T_{u}(B_{1/7})$.

To prove the above fact, fix \(r>0\) such that \( B_{r}( x_{0})\subset B_{1/7}\), and pick $\bar x$ a point in $\overline B_{r}( x_{0})$ where the function $u(\cdot)+c(\cdot,y)$ attains its minimum, i.e.,
\[
\bar x \in \argmin_{\overline B_{r}( x_{0})}\big\{u(x)+c(x,y)\big\}.
\]
Since, by \eqref{strict},
\[
\begin{split}
\min_{x \in \partial B_{r}( x_{0})}\big \{u(x)+c(x,y)\big\}&\ge \min_{x \in \partial B_{r}( x_{0})}\big \{u(x)+c(x,y_{0})\big\}-\e \|c\|_{C^{1}} \\
&\ge u(x_{0})+c(x_{0},y_{0})+ c_{0}r^{\gamma}-\e\|c\|_{C^{1}} ,
\end{split}
\]
while
\[
u(x_{0})+c(x_{0},y)\le c(x_{0},y_{0})+u(x_{0})+\e\|c\|_{C^{1}},
\]
choosing \(\e< \frac{c_{0}}{2\|c\|_{C^{1}}}r^{\gamma}\) we obtain that \(\bar x\in B_{r}(x_{0})\subset B_{1/7}\). This implies that $\bar x$ is a local minimum for \(u(\cdot)+c(\cdot,y)\), concluding the proof.
\end{proof}

\section{Comparison principle and \(C^{2,\alpha}\) regularity}\label{sect:C2a}

We begin this section with a change of variable formula for the $c$-exponential map.

\begin{lemma}\label{c2smooth}Let $\Omega$ be an open set,  \(v\in C^{2}(\Omega)\), and assume that \(\nabla v(\Omega)\subset {\rm Dom}\cexp\) and that
\[
D^{2}v(x)+D_{xx}c\big(x,\cexp_x(\nabla v (x))\big )\ge 0 \quad \forall\, x \in \Omega.
\]
Then, for every Borel set \(A\subset \Omega\),
\begin{equation*}
|\cexp(\nabla v (A))|\le \int_{A} \frac{\det\big(D^{2}v(x)+D_{xx}c\bigl(x,\cexp_x(\nabla v(x))\bigr)\big)}{\bigl|\det \bigl(D_{xy}c\bigl(x,\cexp_x(\nabla v(x))\bigr)\bigr)\bigr|}\, dx.
\end{equation*}
In addition, if the map \(x\mapsto \cexp_x(\nabla v(x))\) is injective, then equality holds.
\end{lemma}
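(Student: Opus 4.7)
The plan is to apply the classical area formula to the $C^1$ map
\[
T(x) := \cexp_x(\nabla v(x)), \qquad x\in \Omega,
\]
after computing its Jacobian explicitly via implicit differentiation of the defining relation. This reduces the statement to the standard change-of-variables identity for Lipschitz (indeed $C^1$) maps.

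First I would verify that $T$ is of class $C^1$ on $\Omega$. Since $v\in C^2(\Omega)$ we have $\nabla v\in C^1(\Omega)$; on the other hand, by {\bf (C0)}--{\bf (C3)}, the implicit function theorem applied to the relation $p = -D_xc(x,y)$ shows that $\cexp$ is $C^1$ on its domain. Hence $T$ is $C^1$.

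Next I would compute $DT$. Using the characterization of $\cexp$, the map $T$ satisfies
\[
-D_x c\bigl(x,T(x)\bigr) \;=\; \nabla v(x) \qquad\forall\,x\in\Omega.
\]
Differentiating in $x$ yields
\[
-D_{xx}c\bigl(x,T(x)\bigr) \;-\; D_{xy}c\bigl(x,T(x)\bigr)\,DT(x) \;=\; D^2 v(x),
\]
so, using {\bf (C3)} to invert $D_{xy}c$,
\[
DT(x) \;=\; -\bigl[D_{xy}c(x,T(x))\bigr]^{-1}\,\bigl(D^2v(x)+D_{xx}c(x,T(x))\bigr).
\]
Taking absolute determinants and using the hypothesis $D^2v+D_{xx}c\ge 0$ (so the determinant of this symmetric matrix is nonnegative), we obtain
\[
|\det DT(x)| \;=\; \frac{\det\bigl(D^2v(x)+D_{xx}c(x,T(x))\bigr)}{\bigl|\det D_{xy}c(x,T(x))\bigr|}.
\]

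Finally, applying the area formula to the $C^1$ map $T:\Omega\to\R^n$ (see e.g.\ Federer, or \cite[Chapter~5]{AGS}), we get, for any Borel set $A\subset\Omega$,
\[
\int_A |\det DT(x)|\,dx \;=\; \int_{\R^n} \#\{x\in A: T(x)=y\}\,dy \;\geq\; \bigl|T(A)\bigr| \;=\; \bigl|\cexp(\nabla v(A))\bigr|,
\]
which proves the stated inequality. If in addition $T$ is injective, then the counting function is exactly $\mathbf 1_{T(A)}$, so the middle integral equals $|T(A)|$ and equality holds throughout.

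There is no real obstacle here: everything is a direct consequence of the implicit differentiation and the area formula, the only point requiring attention being the $C^1$-regularity of $T$, which is immediate from {\bf (C3)} and $v\in C^2$.
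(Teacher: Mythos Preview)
Your proposal is correct and follows essentially the same approach as the paper: differentiate the defining relation $\nabla v(x)=-D_xc(x,T(x))$ to compute the Jacobian of the $C^1$ map $T(x)=\cexp_x(\nabla v(x))$, then apply the area formula. The paper states this more tersely (citing \cite{EG} for the area formula), but the argument is identical.
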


\begin{proof}
The result follows from a direct application of the Area Formula \cite[Section 3.3.2, Theorem 1]{EG} once one notices that, differentiating the identity
\[
\nabla v(x)=-D_{x}c\big(x,\cexp_x(\nabla v(x))\big )
\]
 (see \eqref{eq:cexp}), the Jacobian determinant of the $C^1$ map \(x\mapsto \cexp_x(\nabla v(x))\) is given precisely by
\[
\frac{\det\big(D^{2}v(x)+D_{xx}c\bigl(x,\cexp_x(\nabla v(x))\bigr)\big)}{\bigl|\det \bigl(D_{xy}c\bigl(x,\cexp_x(\nabla v(x))\bigr)\bigr)\bigr|}.
\]
\end{proof}

In the next proposition we show a comparison principle between $C^1$ $c$-convex functions and 
smooth solutions to the Monge-Amp\`ere equation. \footnote{A similar result
for the case $c(x,y)=|x-y|^p$
appeared in \cite[Theorem 6.2]{CMN}.
Here, however, we have to deal with some additional difficulties
due to the fact that the $c$-exponential map is not necessarily defined on the whole $\R^n$.}
As already mentioned at the beginning of  Section \ref{sect:C1a} (see also Remark \ref{localtoglobal}),
the $C^1$ regularity of $u$ is crucial to ensure that the $c$-subdifferential
coincides with its local counterpart $\cexp(\p^-u)$.
 
Here and in the sequel, we 
use $\co[E]$ to denote the convex hull of a set $E$. Also,
recall that $ \mathcal N_r(E)$ denotes the $r$-neighborhood of $E$.

\begin{proposition}[Comparison principle]\label{comparison}
Let $u$ be a \(c\)-convex function of class $C^1$ inside the set $S:=\{u < 1\}$, and assume that $u(\zero)=0$, $B_{1/K}\subset S \subset B_K$,
and that \(\nabla u(S)\subset\subset {\rm Dom}\cexp\).
Let $f,g$ be two densities such that 
\begin{equation}\label{eq:fg eps}
\|f/\l_1-  1 \|_{C^0(S)}+ \| g/\l_2- 1\|_{C^0(T_u(S))} \leq \e
\end{equation}
for some constants $\lambda_1,\lambda_2 \in (1/2,2)$ and $\e\in(0,1/4)$,
and assume that $(T_{u})_\sharp f =g$.
Furthermore, suppose that
\begin{equation}\label{eq:c delta}
\|c + x\cdot y\|_{C^2(B_K\times B_K)} \leq \delta.
\end{equation}
Then there exist a universal constant $\gamma \in (0,1)$, and \(\delta_{1}=\delta_{1}(K)>0\) small, such that the following holds:
Let $v$ be the solution of
$$
\begin{cases}
\det(D^2v)=\l_1/\l_2\quad &\text{in }\mathcal N_{\delta^\gamma} ( \co[S]),\\
v=1 & \text{on }\p\bigl(\mathcal N_{\delta^\gamma} ( \co[S])\bigr).
\end{cases}\
$$
Then
\begin{equation} \label{figurati}
\|u-v\|_{C^0(S)} \leq C_K\left( \e +\delta^{\gamma/n}\right)\qquad \text{provided $\delta \leq \delta_1$,}
\end{equation}
where $C_K$ is a constant independent of $\lambda_1$, $\lambda_2$, $\e$, and $\delta$ (but which depends on $K$).
\end{proposition}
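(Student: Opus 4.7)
The plan is to adapt Caffarelli's quantitative comparison principle for the classical Monge--Amp\`ere equation to our setting, where $v$ is the smooth reference solution of $\det D^2 v = \l_1/\l_2$ on $\tilde S := \mathcal N_{\delta^\gamma}(\co[S])$ while $u$ only satisfies a $c$-version of this equation up to $O(\e+\delta)$ errors. The argument decomposes into three ingredients: (i) a boundary estimate forcing $v$ to be close to $1$ on $\partial S$; (ii) a bulk estimate translating the transport identity plus Lemma \ref{c2smooth} into quantitative control of the $c$-Monge--Amp\`ere density of $u$; and (iii) an Alexandrov/ABP-type comparison combining the two.

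For (i), convexity of $v$ together with $v\equiv 1$ on $\partial\tilde S$ immediately gives $v\le 1$ on $\tilde S$, in particular on $\partial S$. For the matching lower bound I would use a convex paraboloid barrier: fix an enclosing ball $B_R\supset\tilde S$ with $R\le C_K$, let
\[
P(x):=1+\tfrac{1}{2}(\l_1/\l_2)^{1/n}(|x|^2-R^2),
\]
and observe that $P$ is convex with $\det D^2 P=\l_1/\l_2$ and $P\le 1$ on $\partial\tilde S$. Classical Monge--Amp\`ere comparison on $\tilde S$ yields $P\le v$ on $\tilde S$, and since every point of $\partial S$ lies at distance at least $\delta^\gamma$ from $\partial\tilde S$, this gives $v\ge 1-C_K\delta^{\gamma/n}$ on $\partial S$ once $\gamma\in(0,1)$ is chosen universal (a sharper use of Alexandrov's estimate on $\{v<1-t\}$-sections promotes any bound of the form $\delta^\gamma$ to $\delta^{\gamma/n}$ if needed).

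For (ii), because $u\in C^1(S)$ the map $T_u=\cexp(\nabla u)$ is continuous, and by Remark \ref{localtoglobal} we have $\pac u(x)=\{T_u(x)\}$ for every $x\in S$. Applying Lemma \ref{c2smooth} to any Borel set $A\subset S$ together with $(T_u)_\sharp f=g$ and \eqref{eq:fg eps} yields
\[
\int_A \frac{\det\bigl(D^2u + D_{xx}c(x,T_u(x))\bigr)}{\bigl|\det D_{xy}c(x,T_u(x))\bigr|}\,dx \;=\; |T_u(A)| \;=\; \frac{\l_1}{\l_2}|A|\,\bigl(1+O(\e)\bigr).
\]
Using \eqref{eq:c delta} to replace $D_{xx}c$ by $O(\delta)$ and $-D_{xy}c$ by $\Id+O(\delta)$, this says that the Monge--Amp\`ere density of the \emph{semiconvex} function $u$ equals $\l_1/\l_2$ up to a multiplicative error $O(\e+\delta)$ on $S$.

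Finally, for (iii), the estimate $\|c+x\cdot y\|_{C^2}\le\delta$ implies that $u+C_0\delta|x|^2/2$ is convex for a universal $C_0$; after this harmless quadratic perturbation (applied also to $v$) I compare two genuinely convex functions $\bar u,\bar v$ on $\co[S]$ whose boundary values on $\partial S$ differ by at most $C_K\delta^{\gamma/n}$ (by step (i) and $u\equiv 1$ on $\partial S$) and whose Monge--Amp\`ere measures differ by $O(\e+\delta)$ (by step (ii)). The quantitative Alexandrov maximum principle applied to $(\bar u-\bar v)^\pm$ then gives
\[
\|u-v\|_{C^0(S)} \;\le\; C_K\bigl(\delta^{\gamma/n}+(\e+\delta)^{1/n}\bigr) \;\le\; C_K(\e+\delta^{\gamma/n}),
\]
the last inequality using $\delta\le 1$ and shrinking $\gamma$ if necessary. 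The main obstacle I anticipate is precisely this last step: converting the $c$-Monge--Amp\`ere information for the \emph{semi}convex function $u$ into input for a classical convex comparison, while keeping careful track of how the convexifying perturbation $C_0\delta|x|^2/2$ interacts with the Monge--Amp\`ere measures, and running Alexandrov's estimate on the possibly only Lipschitz convex set $\co[S]$. The boundary estimate for $v$ is the secondary delicate point, because $\partial\tilde S$ inherits only the Lipschitz regularity of $\co[S]$, so the sharp $\delta^{\gamma/n}$ rate must be squeezed out of convex barriers rather than from smooth boundary regularity.
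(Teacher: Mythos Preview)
Your step (ii) has a genuine gap: Lemma \ref{c2smooth} requires the potential to be $C^2$, but $u$ is only $C^1$ here, so writing the integral $\int_A \det(D^2u+D_{xx}c)/|\det D_{xy}c|$ is not justified. More importantly, even granting that $u$ is semiconvex so that $D^2u$ exists a.e., the \emph{classical} Monge--Amp\`ere measure of the convexified function $\bar u=u+C_0\delta|x|^2/2$ in step (iii) is $|\partial^-\bar u(\cdot)|$, which in general dominates (and need not equal) the absolutely continuous part $\det D^2\bar u\,dx$. Without a $C^{1,1}$ bound on $u$ you cannot rule out a singular part, so knowing the a.e. density gives you only a one--sided inequality. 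And even for that one side, the perturbation is not ``harmless'': passing from $\det(D^2u+D_{xx}c)$ to $\det(D^2u+C_0\delta\,\Id)$ is a nonlinear operation, and with no upper bound on $D^2u$ the error is not $O(\delta)$. So the ABP comparison in (iii) lacks the required two--sided control of the Monge--Amp\`ere measure of $\bar u$.

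The paper sidesteps this by never applying the area formula to $u$. Instead it perturbs the \emph{smooth} function $v$ multiplicatively, setting $v^+:=(1+4\e+2\sqrt\delta)v-4\e-2\sqrt\delta$ (and an analogous $v^-$), so that $v^+<u$ on $\partial S$ and $\det D^2v^+=(1+4\e+2\sqrt\delta)^n\l_1/\l_2$. If $Z:=\{u<v^+\}\neq\emptyset$, sliding supporting hyperplanes of the convex $v^+$ gives $\nabla v^+(Z)\subset\nabla u(Z)$, hence $|\cexp(\nabla v^+(Z))|\le |T_u(Z)|$. Now Lemma \ref{c2smooth} is applied to the $C^2$ function $v^+$: here the key point is that, by interior Pogorelov--type estimates, $D^2v\ge \delta^{\gamma/\tau}\Id/C''_K$ on $\co[S]$ for some universal $\tau$, and the \emph{specific} choice $\gamma:=\tau/4$ is made precisely so that the $\sqrt\delta\,D^2v$ part of $D^2v^+$ absorbs the error $D_{xx}c=O(\delta)$. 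This yields $|\cexp(\nabla v^+(Z))|\ge (1+4n\e)(\l_1/\l_2)|Z|$, while the transport condition gives $|T_u(Z)|\le(1+3\e)(\l_1/\l_2)|Z|$, a contradiction. Note that $\gamma$ is not a free parameter you can shrink at the end; it is dictated by the interior Hessian lower bound for $v$, which also makes $x\mapsto\cexp_x(\nabla v^+(x))$ injective so that Lemma \ref{c2smooth} gives an equality. Your boundary step (i) is fine in spirit and close to what the paper does, but the bulk comparison needs to be reorganized along these lines.
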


\begin{proof}
First of all we observe that,
since $u(\zero)=0$, $u=1$ on $\partial S$, $S \subset B_K$, and $\|c + x\cdot y\|_{C^2(B_K)} \leq \delta \ll 1$,
it is easy to check that there exists a universal constant $a_1>0$ such that
\begin{equation}
\label{eq:gradient nonzero}
|D_xc(x,y)| \geq a_1\qquad \forall\, x\in \partial S,\, y =\cexp_x(\nabla u(x)).
\end{equation}
Thanks to \eqref{eq:gradient nonzero} and \eqref{eq:c delta}, it follows from the Implicit Function Theorem that, for each $x\in \partial S$, the boundary of the set
$$
E_x:=\big\{z \in B_K:\ c(z,y)-c(x,y)+u(x) \leq 1\big\}
$$
is of class $C^2$ inside $B_K$, and its second fundamental form is bounded by $C_K\delta$, where $C_K>0$ depends only on $K$.
Hence, since $S$ can be written as
$$
S:=\bigcap_{x\in \partial S} E_x,
$$
it follows that
$$
\text{$S$ is a $(C_K\delta)$-semiconvex set},
$$
that is, for any couple of points $x_0,x_1 \in S$ the ball centered at $x_{1/2}:=(x_0+x_1)/2$ of radius $C_K\delta|x_1-x_0|^2$ intersects $S$. Since $S\subset B_K$, this implies that
$\co[S] \subset \mathcal N_{C'_{K}\delta} (S)$ for some positive constant $C_K'$
depending only on $K$.
Thus,  for any $\gamma \in(0,1)$ we obtain 
\[
\mathcal N_{\delta^\gamma} ( \co[S]) \subset \mathcal N_{(1+C'_{K}) \delta^\gamma}( S).
\]
Since $v=1$ on $\p\bigl(\mathcal N_{\delta^\gamma} ( \co[S])\bigr)$ and $\l_1/\l_2\in (1/4,4)$,
by standard interior estimates for solution of the  Monge-Amp\`ere equation with constant right hand side (see for instance \cite[Lemma 1.1]{CL}), we obtain
\begin{eqnarray}
&\osc_S v \le C''_{K} \label{osc}\\
&1- C''_{K}\delta^{\gamma/n}\le v< 1 &\quad \text{on $\partial S$}, \label{MAsup}\\
&D^2 v \geq \delta^{\gamma/\tau} \Id/C''_{K}&\quad \text{in \(\co[S]\)}, \label{MAder}
\end{eqnarray}
for some $\tau>0$ universal, and some constant $C''_{K}$ depending only on $K$.

Let us define
$$
v^+:=(1+4\e+2\sqrt{\delta} )v-4\e-2\sqrt{\delta},\qquad v^-:=(1 -4 \e-\sqrt{\delta}/2)v+4\e+\sqrt{\delta}/2+2 C''_{K} \delta^{\gamma/n}.
$$
Our goal is to show that we can choose \(\gamma\) universally small
so that $v^- \geq u \geq v^{+}$ on \(S\). Indeed, if we can do so, then
by  \eqref{osc} this will imply \eqref{figurati}, concluding the proof.

First of all notice that, thanks to \eqref{MAsup}, $v^- > u > v^{+}$ on \(\partial S\).
Let us show first  that \(v^{+}\le v\).

Assume by contradiction this is not the case.
Then, since $u > v^{+}$ on \(\partial S\),  
\[
\emptyset \ne Z:=\big\{ u< v^{+}\big\} \subset \subset S.
\]
Since \(v^{+}\) is convex, taking any  supporting plane to \(v^{+}\) at \(x\in Z\), moving it down and then lifting  it up until it touches \(u\) from below, we deduce that
\begin{equation}\label{inclusionsottodif}
\n v^{+}(Z)\subset \nabla u (Z)
\end{equation}
(recall that both $u$ and $v^+$ are of class $C^1$),
thus by Remark \ref{localtoglobal}
\begin{equation}\label{bologna}
|\cexp (\nabla v^{+}(Z))|\le |T_{u}(Z)|.
\end{equation}
We show that this is impossible.
 For this, using \eqref{MAder} and choosing \(\gamma:= \tau/4\),  for any \(x\in Z\) we compute
\[
\begin{split}
D^{2}v^{+}(x)+D_{xx}c\bigl(x,\cexp_x(\nabla v^{+}(x))\bigr)&\ge(1+\sqrt{\delta}+4\e)D^{2}v+\sqrt{\delta}D^{2}v-\delta \Id\\
&\ge(1+\sqrt{\delta}+4\e)D^{2}v+(\delta^{3/4}/C''_{K}-\delta) \Id\\
&\ge(1+\sqrt{\delta}+4\e)D^{2}v,
\end{split}
\]
provided \(\delta\) is sufficiently small, the smallness depending only on  \(K\). Thus, thanks \eqref{eq:c delta} we have
\[
\begin{split}
\frac{\det\big(D^{2}v^{+}(x)+D_{xx}c(x,\cexp_x(\nabla v^{+}(x)))\big)}{\bigl|\det \bigl(D_{xy}c\bigl(x,\cexp_x(\nabla v^{+}(x))\bigr)\bigr)\bigr|}&\ge
\frac{\det\big((1+\sqrt{\delta}+4\e)D^{2}v\big)}{1+\delta}\\
&\ge (1+\sqrt{\delta}+4\e)^{n}(1-2\delta)\frac{\l_1}{\l_2}\\
&\ge (1+4n\e)\frac{\l_1}{\l_2}.
\end{split}
\]
In addition, thanks \eqref{MAder} and \eqref{eq:c delta}, since $\delta^{\gamma/\tau}=\delta^{1/4} \gg \delta$
we see that
$$
D^2 v^+ > \|D_{xx}c\|_{C^0(B_K\times B_K)} \Id \qquad \text{inside $\co[S].$}
$$
Hence,
for any $x,z \in Z$, \(x\ne z\) and $y=\cexp_x(\nabla v^+(x))$ (notice that $\cexp_x(\nabla v^+(x))$ is well-defined because of \eqref{inclusionsottodif}
and the assumption \(\nabla u(S)\subset\subset {\rm Dom}\cexp\)), it follows
\[
\begin{split}
v^+(z)+c(z,y) &\geq v^+(x)+c(x,y)\\
&\qquad \qquad \quad+ \frac{1}{2}\int_0^1 \Bigl(D^2 v^+\bigl(tz+(1-t)x\bigr)+D_{xx}c\bigl(tz+(1-t)x,y\bigr)\Bigr)[z-x,z-x]\,dt\\
&> v^+(x)+c(x,y),
\end{split}
\]
where we used that $\nabla v^+(x)+D_xc(x,y)=0$.
This means that the supporting function $z \mapsto -c(z,y)+c(x,y)+v^+(x)$ can only touch $v^+$ from below at $x$, which implies that
the map $Z \ni x\mapsto \cexp_x(\nabla v^+(x))$ is injective.
Thus, by Lemma \ref{c2smooth} we get 
\begin{equation}
\label{eq:measure vZ}
|\cexp (\nabla v^{+}(Z))| \geq (1+4n\e)\frac{\l_1}{\l_2}\,|Z|.
\end{equation}
On the other hand, since $u$ is $C^1$, it follows from $(T_u)_\sharp f=g$ and \eqref{eq:fg eps} that
$$
|T_u(Z)|=\int_Z \frac{f(x)}{g(T_u(x))}\,dx \leq \frac{\l_1(1+\e)}{\l_2(1-\e)}\,|Z|\leq (1+3\e)\frac{\l_1}{\l_2}\,|Z|.
$$
This estimate combined with \eqref{eq:measure vZ} shows that \eqref{bologna} is impossible unless $Z$ is empty. This proves that $v^+\leq u$.

The proof of the inequality \(v^{-}\le u\) follows by the same argument except for a minor modification.
More precisely, let us assume by contradiction that $W:=\{ u>v^{-}\}$
is nonempty. In order to apply the previous argument we would need to know that \(\nabla v^{-}(W)\subset {\rm Dom}\cexp\).
However, since the gradient of $v$ can be very large near $\partial S$, this may be a problem.

To circumvent this issue we argue as follows: since $W$ is nonempty, there exists a positive constant $\bar \mu$ such that 
$u$ touches $v^{-}+ \bar \mu$ from below inside $S$.
Let $E$ be the contact set, i.e., $E:=\{ u=v^{-}+ \bar\mu\}$.
Since both $u$ and $v^-$ are $C^1$, $\nabla u=\nabla v^-$ on $E$.
Thus, if $\eta>0$ is small enough, then the set $W_\eta:=\{ u>v^{-}+ \bar\mu-\eta\}$ is nonempty
and \(\nabla v^{-}(W_{\eta})\) is contained in a small neighborhood of \(\nabla u(W_{\eta})\), which is compactly contained in \( {\rm Dom}\cexp\).
At this point, one argues exactly as in the first part of the proof, with $W_\eta$ in place of $Z$, to find a contradiction.
\end{proof}

\begin{theorem}\label{C2alpha} Let \(u,f,g,\eta_0, \delta_0\) be as in Theorem \ref{c1alpha}, and assume in addition that \(c\in C^{k,\alpha}(B_{3}\times B_{3})\) and $f,g \in C^{k,\alpha}(B_{1/3})$
for some $k \geq 0$ and $\alpha \in (0,1)$.
There exist small constants \(\eta_{1} \leq \eta_0\) and \(\delta_{1}\leq \delta_0\) such that, if
 \begin{equation}\label{eq:fgC2a}
 \|f-  \1_{\C_{1}} \|_{\infty}+ \| g- \1_{\C_{2}}\|_{\infty} \leq \delta_{1},
 \end{equation}
 \begin{equation}\label{eq:costconvC2a}
 \|c(x,y)+x\cdot y\|_{C^{2,\alpha}(B_{3}\times B_{3})}\le \delta_{1},
 \end{equation}
 and 
\begin{equation}\label{eq:parabolaC2a}
\left\|u-\frac{1}{2} |x|^{2}\right\|_{C^{0}(B_{3})}\le \eta_{1},
\end{equation} 
then \(u \in C^{k+2,\alpha}(B_{1/9})\).
\end{theorem}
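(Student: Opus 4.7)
The plan is to combine Theorem \ref{c1alpha} and Corollary \ref{cconv} with the comparison principle (Proposition \ref{comparison}) to run a Caffarelli-type approximation scheme, obtaining first a $C^{2,\sigma'}$ estimate at every point of $B_{1/9}$, and then bootstrapping via standard elliptic Schauder theory to $C^{k+2,\alpha}$. By shrinking $\eta_1\le \eta_0$ and $\delta_1\le \delta_0$ at the outset, Theorem \ref{c1alpha} gives $u\in C^{1,\beta}(B_{1/7})$ for any $\beta<1$, Corollary \ref{cconv} gives strict $c$-convexity with a quantitative modulus, and Remark \ref{localtoglobal} yields $\partial_c u(x)=\{\cexp_x(\nabla u(x))\}$ at every point; in particular, every $c$-support touches $u$ at a single point, and the hypotheses of Proposition \ref{comparison} will be fulfilled after rescaling.

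Fix $x_0\in B_{1/9}$ and $y_0=\cexp_{x_0}(\nabla u(x_0))$. I repeat verbatim the double change of variables of Steps~4--5 in the proof of Theorem \ref{c1alpha}: translate $(x_0,y_0)$ to the origin, modify $u$ and $c$ by a linear term to normalize $\bar c$ as in \eqref{barc}, and then rescale the section $S(x_0,y_0,u,h)$ to unit size via the matrix $A=[D^2v(x_0)]^{-1/2}$ coming from the first iteration. Calling the resulting objects $\tilde u$, $\tilde c$, $\tilde f$, $\tilde g$ and $\tilde S$, Steps~3 and 5 of Theorem \ref{c1alpha} guarantee that $B_{1/K}\subset \tilde S\subset B_K$ for a universal $K$, that $\|\tilde c+x\cdot y\|_{C^2(B_K\times B_K)}\le \tilde\delta$ with $\tilde\delta=o(1)$ as $h\to 0$, that $\nabla \tilde u(\tilde S)\subset\subset {\rm Dom}(\tilde c\text{-exp})$, and that $\tilde f/\lambda_1,\tilde g/\lambda_2$ are $\varepsilon$-close to $1$ for some $\varepsilon\to 0$ controlled by the moduli of continuity of $f,g$ together with $h$. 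Thus Proposition \ref{comparison} applies, giving a convex solution $v$ of $\det D^2v=\lambda_1/\lambda_2$ on a convex neighborhood of $\tilde S$ with $\|\tilde u-v\|_{C^0(\tilde S)}\le C(\varepsilon+\tilde\delta^{\gamma/n})$.

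Caffarelli's interior regularity theory for the classical Monge--Amp\`ere equation with constant right-hand side yields $v\in C^{2,\sigma}$ with universal bounds in a fixed smaller domain, so that $v$ agrees with its second order Taylor polynomial at the origin up to an error of order $|\tilde z|^{2+\sigma}$. Combining this with the closeness estimate above, $\tilde u$ is approximated at the origin by a quadratic polynomial $Q$ with $|D^2Q-\Id|$ small, the error being of order $\tilde \eta:=C(\varepsilon+\tilde\delta^{\gamma/n})+r^{2+\sigma}$ on $B_r$. Pulling $Q$ back by the $c$-exponential normalization produces a new renormalized $c$-convex function to which the same argument applies, and the key point is that the parameters of the section chosen at the next scale inherit a better quadratic fit, with the density/cost errors shrinking by the continuity of $f,g$ and the $C^2$ regularity of $c$. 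Iterating this on a geometric sequence of scales (in the spirit of \cite{Caf4}, \cite{JW}) one extracts at each $x_0\in B_{1/9}$ a second order Taylor expansion with a H\"older modulus $\sigma'>0$ for $D^2u$, uniformly in $x_0$. I expect the main obstacle of the proof to be precisely this quantitative iteration: namely, verifying that the normalization at each step preserves the smallness of $\|\tilde c+x\cdot y\|_{C^2}$ and of $\|\tilde f/\lambda_1-1\|+\|\tilde g/\lambda_2-1\|$, so that Proposition \ref{comparison} can be re-applied with geometrically decaying constants, and showing that the resulting affine normalizations $A_k$ converge with a power rate.

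Once $u\in C^{2,\sigma'}(B_{1/9})$ is established, the rest is routine. Thanks to \eqref{eq:pos def}, \eqref{eq:parabolaC2a}, and the $C^{2,\sigma'}$ regularity, the matrix $D^2u+D_{xx}c(\cdot,\cexp(\nabla u))$ is uniformly positive definite in $B_{1/9}$, so \eqref{eq:MA} is a uniformly elliptic fully nonlinear equation for $u$ with right-hand side of class $C^{k,\alpha}$ (as a composition of $f$, $g$, and $D^2c$ with the $C^{1,\sigma'}$ map $x\mapsto \cexp_x(\nabla u(x))$). Standard Evans--Krylov / Schauder estimates for concave (after a change of unknown, $\log\det$ is concave) uniformly elliptic equations, combined with differentiating the equation and bootstrapping with linear Schauder theory, then upgrade $u$ from $C^{2,\sigma'}$ to $C^{k+2,\alpha}$ in $B_{1/9}$, as desired.
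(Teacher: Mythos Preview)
Your proposal is correct and follows essentially the same route as the paper: after the $C^{1,\beta}$ step, apply the comparison principle (Proposition \ref{comparison}) on normalized sections to approximate $u$ by smooth solutions of the classical Monge--Amp\`ere equation with constant right-hand side, iterate along a geometric sequence of scales to obtain $C^{2,\sigma'}$, and then bootstrap by Schauder theory. The paper's only organizational difference is that it first isolates a $C^{1,1}$ estimate (uniform ``good shape'' of the sections $S_h$, via an induction bounding $|D^2v_k(\zero)|$ through the telescoping sum $\sum_j |D^2v_j(\zero)-D^2v_{j+1}(\zero)|$) before extracting the $C^{2,\sigma'}$ modulus from the geometric convergence of the $D^2v_k(\zero)$, whereas you merge these two pieces into a single iteration.
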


\begin{proof} We divide the proof in two steps.

\medskip
\noindent
$\bullet$ \textit{Step 1: \(C^{1,1}\) regularity}.
Fix a point $x_0 \in B_{1/8}$, and set $y_0:=\cexp_{x_0}(\n u(x_0))$.
Up to replace $u$ (resp. $c$) with the function $u_1$ (resp. $c_1$) constructed in 
Steps 4 and 5 in the proof of Theorem \ref{c1alpha}, we can assume that \(u\ge 0\),  \(u(\zero)=0\), that
\[
S_{h}:=S(\zero,\zero,u,h)=\{u\le h\},
\]
and that
\begin{equation}\label{idpunto}
D_{xy}c(\zero,\zero)=-\Id.
\end{equation}
Under these assumptions we will show that  the sections of \(u\)  are of ``good shape'', i.e.,
\begin{equation}\label{goodshape}
B_{\sqrt{h}/K}\subset S_{h} \subset B_{K\sqrt{h}} \qquad \forall\,h\leq h_{1},
\end{equation}
for some universal \(h_{1}\) and \(K\). Arguing as in Step 6 of Theorem \ref{c1alpha}, this will give that \(u\) is \(C^{1,1}\) at the origin, and thus at every point in \(B_{1/8}\).

First of all notice that, thanks to \eqref{eq:parabolaC2a}, for any \(h_{1}>0\) we can choose \(\eta_{1}=\eta_1(h_1)>0\) small enough such that \eqref{goodshape} holds for \(S_{h_{1}}\)
with $K=2$. Hence, assuming without loss of generality that $\delta_1 \leq 1$, 
we see that 
$$
B_{\sqrt{h_{1}}/3}\subset \mathcal N_{\delta_1^\gamma\sqrt{h_{1}}}(\co[S_{h_{1}}])\subset B_{3\sqrt{h_{1}}},
$$
where \(\gamma\) is the exponent from Proposition \ref{comparison}.
Let $v_1$ solve the Monge-Amp\`ere equation
$$
\left\{
\begin{array}{ll}
\det(D^2v_{1})=f(\zero)/g(\zero) &\text{in }\mathcal N_{\delta_1^\gamma\sqrt{h_{1}}}(\co[S_{h_{1}}]),\\
v_{1}=h_{1} & \text{on }\partial\mathcal N_{\delta_1^\gamma\sqrt{h_{1}}}(\co[S_{h_{1}}]).
\end{array}
\right.
$$
Since $B_{1/3}\subset N_{\delta_1^\gamma\sqrt{h_{1}}}(\co[S_{h_{1}}])/\sqrt{h_1} \subset B_3$,
by standard Pogorelov estimates applied to the function $v_1(\sqrt{h_1}x)/h_1$ (see for instance \cite[Theorem 4.2.1]{G}), it follows that 
$|D^2v_1(0)|\leq M$, with $M>0$ some large universal constant. 

 Let \(h_{k}:=h_{1}2^{-k}\) and define $\bar K\geq 3$ to be the largest number such that any solution $w$ of 
\begin{equation}
\label{MA1}
\left\{
\begin{array}{ll}
\det(D^2w)=f(\zero)/g(\zero) &\text{in }Z,\\
w=1 & \text{on }\partial Z,
\end{array}
\right.
\qquad \text{with}\quad B_{1/\bar K}\subset Z\subset B_{\bar K},
\end{equation}
satisfies $|D^2w(0)|\leq M+1$. \footnote{The fact that $\bar K$ is well defined (i.e., $3 \leq \bar K <\infty$) follows by the following facts: first of all,
by definition, $M$ is an a-priori bound for $|D^2w(0)|$ whenever $w$ is a solution of \eqref{MA1} with $B_{1/3}\subset Z \subset B_3$, so $\bar K \geq 3$.
On the other hand  $\bar K \leq \sqrt{2(M+1)}$. Indeed, since \(1/2\le f(\zero)/g(\zero)\le2\) (by  \eqref{eq:fgC2a}) and \(M\ge 1\),  the function
$$\bar w:=(M+1)x_1^2 +\frac{f(\zero)}{g(\zero)}\,\frac{x_2^2}{M+1} + x_3^2 + \ldots + x_n^2$$
is a solution of \eqref{MA1} such that $B_{1/\sqrt{2(M+1})}\subset B_{1/\sqrt{M+1}}\subset \{\bar w\le1\} \subset B_{\sqrt{2(M+1)}}$ and $|D^2\bar w(0)|=2(M+1)$ .}
We prove by induction that \eqref{goodshape} holds with \(K=\bar K\).

If $h=h_1$ then we already know that \eqref{goodshape} holds with $K=2$ (and so with $K=\bar K$).

Assume now that \eqref{goodshape} holds with
\(h=h_{k}\) and \(K=\bar K\), and we want to show that it holds with \(h=h_{k+1}\). For this, for any $k \in \N$ we consider $u_k$
the solution of 
$$
\left\{
\begin{array}{ll}
\det(D^2v_k)=f(\zero)/g(\zero) &\text{in }\mathcal N_{\delta_k^\gamma\sqrt{h_{k}}}(\co[S_{h_{k}}]),\\
v_k=h_{1}2^{-k} & \text{on }\partial N_{\delta_k^\gamma\sqrt{h_{k}}}(\co[S_{h_{k}}]),
\end{array}
\right.
$$
where
\[
\delta_{k}:=\|c(x,y)+x\cdot y\|_{C^{2}(S_{h_{k}}\times T_{u}(S_{h_{k}}))} \leq \delta_1.
\]
Let us consider the rescaled functions
\[
 \bar u_{k}(x):=u\big(\sqrt{h_{k}}x\big)/h_{k}, \qquad  \bar v_{k}(x):=v_{k}\big(\sqrt{h_{k}}x\big)/h_{k}.
\] 
Since by the inductive hypothesis $B_{1/\bar K}\subset \bar S_k:=\{\bar u_{k} \leq 1 \} \subset B_{\bar K}$,
we can apply Proposition \ref{comparison} to deduce that
\begin{equation}
\label{eq:close}
\|\bar u_k -\bar v_k\|_{C^0(\bar S_k)} \leq C_{\bar K}\Big(\osc_{S_{h_{k}}} f+ \osc_{T_u(S_{h_{k}})} g + \delta_k^{\gamma/n}\Big) \leq C_{\bar K}(\delta_1+\delta_1^{\gamma/n}).
\end{equation}
This implies in particular that, if $\delta_1$ is sufficiently small, $B_{1/(2\bar K)}\subset \{\bar v_{k} \leq 1 \} \subset B_{2\bar K}$.
By standard estimates on the sections of solutions to the Monge-Amp\`ere equation,
the shapes of  $\{\bar v_{k} \leq 1 \}$ and $\{\bar v_k\leq 1/2\}$ are comparable,
and in addition sections are well included into each other \cite[Theorem 3.3.8]{G}:
there exists a universal constant $L>1$ such that
$$
B_{1/(L\bar K)} \subset \{\bar v_k\leq 1/2\} \subset B_{L\bar K},\qquad \dist\bigl(\{\bar v_k\leq 1/4\},\partial \{\bar v_k\leq 1/2\}\bigr) \geq 1/(LK).
$$
Using again \eqref{eq:close} we deduce that, if $\delta_1$ is sufficiently small,
$$
B_{1/(2L\bar K)} \subset \{\bar u_k\leq 1/2\} \subset B_{2L\bar K},
\qquad \dist\bigl(\{\bar u_k\leq 1/4\},\partial \{\bar u_k\leq 1/2\}\bigr) \geq 1/(2LK)
$$
so, by scaling back,
\begin{equation}\label{nottoobad}
B_{\sqrt{h_{k+1}}/(2L\bar K)}\subset S_{h_{k+1}}\subset B_{2L\bar K\sqrt{h_{k+1}}}, \qquad \dist\bigl( S_{h_{k+2}},\partial  S_{h_{k+1}}\bigr) \geq \sqrt{h_k}/(2LK).
\end{equation}
This allows us to apply Proposition \ref{comparison} also to $\bar u_{k+1}$ to get
\begin{equation}
\label{eq:close1}
\|\bar u_{k+1} -\bar v_{k+1}\|_{C^0(\bar S_{k+1})} \leq C_{2L\bar K}\Big(\osc_{S_{h_{k+1}}} f+ \osc_{T_u(S_{h_{k+1}})} g + \delta_{k+1}^{\gamma/n}\Big).
\end{equation}
We now observe that, by \eqref{goodshape} and the $C^{1,\beta}$ regularity of $u$ (see Theorem \ref{c1alpha}),
it follows that
$$
\diam(S_{h_{k}})+ \diam (T_u(S_{h_{k}})) \leq C h_k^{\beta/2},
$$
so by the $C^{0,\alpha}$ regularity of $f$ and $g$,
and the $C^{2,\alpha}$ regularity of $c$, we have (recall that $\gamma <1$)
\begin{equation}
\label{sigma}
\osc_{S_{h_{k}}} f+ \osc_{T_u(S_{h_{k}})} g + \delta_k^{\gamma/n}\le C'h_{k}^{\sigma}, \qquad \sigma:=\frac{\alpha \beta\gamma}{2n}
\end{equation}
Hence, by \eqref{eq:close} and \eqref{eq:close1},
$$
\|\bar u_k -\bar v_k\|_{C^0(\bar S_k)}+ \|\bar u_{k+1} -\bar v_{k+1}\|_{C^0(\bar S_{k+1})} \leq C \left(C_{\bar K} + C_{2L\bar K}\right)h_{k}^{\sigma} ,
$$
from which we deduce (recall that $h_k=2h_{k+1}$)
\[
\begin{split}
\|v_{k}-v_{k+1}\|_{C^{0}(S_{h_{k+1}})}&\le \|v_{k}-u\|_{C^{0}(S_{h_{k}})}+\|u-v_{k+1}\|_{C^{0}(S_{h_{k+1}})}\\
&= h_{k}\|\bar u_{k}-\bar v_{k}\|_{C^{0}(S_{k})}+h_{k+1}\|\bar u_{k+1}-\bar v_{k+1}\|_{C^{0}(S_{k+1})}\\
&\le C \left(C_{\bar K} + C_{2L\bar K}\right)h_k^{1+\sigma}. 
\end{split}
\]
Since $v_k$ and $v_{k+1}$ are two strictly convex solutions of the Monge Amp\`ere equation with constant right hand side inside \(S_{h_{k+1}}\),
and since $S_{h_{k+2}}$ is ``well contained'' inside $S_{h_{k+1}}$, by classical Pogorelov and Schauder estimates we get
\begin{eqnarray}
\|D^{2}v_{k}-D^{2}v_{k+1}\|_{C^{0}(S_{h_{k+2}})}\le C_{\bar K}' h^{\sigma}_{k} \label{C2}\\
\|D^{3}v_{k}-D^{3}v_{k+1}\|_{C^{0}(S_{h_{k+2}})}\le C_{\bar K}' h^{\sigma-1/2}_{k}\label{C3},
\end{eqnarray}
where $C_{\bar K}'$ is some constant depending only on $\bar K$.
By \eqref{C2} applied to $v_j$ for all $j=1,\ldots,k$ (this can be done since, by the inductive assumption, \eqref{goodshape} holds for $h=h_j$
with $j=1,\ldots,k$) we obtain 
\[
\begin{split}
|D^{2}v_{k+1}(0)|&\le |D^{2}v_{1}(0)|+\sum_{j=1}^{k}|D^{2} v_{j}(0)-D^{2} v_{j+1}(0)|\\
&\le M+C_{\bar K}'h^{\sigma}_{1}\sum_{j=0}^{k}2^{-j\sigma}\\
&\le M+\frac{C'_{\bar K}}{1-2^{-\sigma}}h^{\sigma}_{1} \le M+1,
\end{split}
\]
provided we choose \(h_{1}\) small enough (recall that \(h_{k}=h_{1}2^{-k}\)). By the definition of \(\bar K\) it follows that also \(S_{h_{k+1}}\) satisfies \eqref{goodshape}, concluding the proof of the inductive step.

\medskip
\noindent
$\bullet$ \textit{Step 2: higher regularity}.
Now that we know that $u \in C^{1,1}(B_{1/8})$, Equation \eqref{eq:MA} becomes uniformly elliptic.
So one may use Evans-Krylov Theorem to obtain that $u \in C_{\rm loc}^{2,\sigma'}(B_{1/9})$ for some $\sigma'>0$, and then standard Schauder estimates
to conclude the proof.
However, for the convenience of the reader, we show here how to give a simple direct proof of the $C^{2,\sigma'}$ regularity of $u$ with $\sigma'=2\sigma$.

As in the previous step, it suffices to show that \(u\) is \(C^{2,\sigma'}\) at the origin, and for this we have to prove that
there exists a sequence of paraboloids \(P_{k}\) such that
\begin{equation}\label{pisa0}
\sup_{B_{r^{k}_{0}/C}}|u-P_{k}|\le C r_{0}^{k(2+\sigma')}
\end{equation}
for some \(r_{0},C>0\). 

Let \(v_{k}\) be as in the previous step, and let \(P_{k}\) be their second order Taylor expansion at $\zero$:
\[
P_{k}(x)=v_{k}(\zero)+\nabla v_{k}(\zero)\cdot x+ \frac 1 2 D^{2} v_{k}(\zero)x\cdot x.
\]
We observe that, thanks to \eqref{goodshape},
\begin{equation}\label{pisa1}
 \|v_{k}-P_{k}\|_{C^{0}(B(0,\sqrt{h_{k+2}}/K))}\le \|v_{k}-P_{k}\|_{C^{0}(S_{h_{k+2}})}\le  C \|D^{3}v_{k}\|_{C^{0}(S_{h_{k+2}})} h^{3/2}_{k}.
\end{equation}
In addition, by \eqref{C3} applied with $j=1,\ldots,k$ and recalling that \(h_{k}=h_{1}2^{-k}\) and \(2\sigma<1\) (see \eqref{sigma}), we get
\begin{equation}\label{pisa2}
\begin{split}
\|D^{3}v_{k}\|_{C^{0}(S_{h_{k+2}})}&\le \|D^{3}v_{1}\|_{C^{0}(S_{h_{3}})}+\sum_{j=1}^{k} \|D^{3}v_{j}-D^{3}v_{j+1}\|_{C^{0}(S_{h_{j+2}})}\\
&\le C\Big(1+\sum_{j=1}^{k} h_j^{(\sigma-1/2)}\Big)\le C h^{\sigma-1/2}_{k}.
\end{split}
\end{equation}
Combining  \eqref{goodshape}, \eqref{pisa1}, \eqref{pisa2},
and recalling \eqref{eq:close} and \eqref{sigma}, we obtain
$$
\|u-P_{k}\|_{C^{0}(B_{\sqrt{h_{k+2}}/K})}\le \|v_{k}-P_{k}\|_{C^{0}(S_{h_{k+2}})}+\|v_{k}-u\|_{C^{0}(S_{h_{k+2}})}\le Ch^{1+\sigma}_{k},
$$
so \eqref{pisa0} follows with \(r_{0}=1/\sqrt{2}\) and \(\sigma'=2\sigma\).

\end{proof}

\end{document}